\newcommand{\be}{\begin{enumerate}}
\newcommand{\ee}{\end{enumerate}}
\newcommand{\Z}{\mathbb{Z}}
\newcommand{\C}{\mathbb{C}}
\newcommand{\co}{\colon\thinspace}
\newcommand{\TC}{{\sf TC}}
\newcommand{\rank}{\mathrm{rank}}
\newcommand{\cd}{\mathrm{cd}}
\newcommand{\secat}{{\sf secat}}
\newcommand{\cat}{{\sf cat}}
\newcommand{\ev}{\mathrm{ev}}
\newtheorem{thm}{Theorem}[section]
\newtheorem{lemma}[thm]{Lemma}
\newtheorem{defn}[thm]{Definition}
\newtheorem{exam}[thm]{Example}
\newtheorem{prop}[thm]{Proposition}
\newtheorem{cor}[thm]{Corollary}
\newtheorem{rem}[thm]{Remark}
\begin{document}

\title[Topological complexity and symmetry]{Topological complexity, fibrations and symmetry}

\author{Mark Grant}
\address{School of Mathematics, The University of Edinburgh, King's Buildings, Edinburgh, EH9 3JZ, UK}
\email{mark.grant@ed.ac.uk}

\keywords{Topological complexity, fibrations, transformation groups}
\subjclass[2000]{55M99 (Primary); 55M30, 57S15, 68T40 (Secondary)}

\maketitle
\begin{abstract} We show how locally smooth actions of compact Lie groups on a manifold $X$ can be used to obtain new upper bounds for the topological complexity $\TC(X)$, in the sense of Farber. We also obtain new bounds for the topological complexity of finitely generated torsion-free nilpotent groups.
\end{abstract}

\section{Introduction}

The topological approach to the motion planning problem in Robotics was initiated by Farber in \cite{Far1}, \cite{Far2}. To each space $X$ is associated a natural number $\TC(X)$, the {\em topological complexity}, which is an invariant of homotopy type and quantifies the complexity of the task of navigation in $X$. Knowledge of $\TC(X)$ is of practical use when designing optimal motion planning algorithms for mechanical systems whose configuration space is of the homotopy type of $X$. In topological terms, $\TC(X)$ is the sectional category (or Schwarz genus) of the free path fibration
\[
\pi\co \mathscr{P}X\to X\times X,\qquad\pi(\gamma)=\big(\gamma(0),\gamma(1)\big),
\]
where $\mathscr{P}X=\{\gamma\co I\to X\}$ denotes the space of paths in $X$ with the compact-open topology.

The invariant $\TC(X)$ is a close relative of the Lusternik-Schnirelmann category $\cat(X)$ (the minimum cardinality amongst covers of $X$ by open sets whose inclusions are null-homotopic), although the two are independent. In fact
\[
\cat(X)\leq \TC(X)\leq\cat(X\times X)
\]
for any space $X$, and either inequality can be an equality (one does not have to look further than the orientable surfaces for examples, see \cite{Far1} Theorem 9). There is strong evidence that $\TC(X)$ captures finer information about the homotopy type of $X$ than does $\cat(X)$, and is therefore harder to compute. For instance, computing the topological complexity of real projective spaces is equivalent to solving the immersion problem for these manifolds \cite{FTY}, whereas $\cat(P^n)=n+1$. Another striking example is given by aspherical spaces, whose category equals the cohomological dimension of their fundamental group (see \cite{EG57}, \cite{Sta}). However no such algebraic description of the topological complexity is known, even conjecturally (see the introduction to \cite{CP} for a discussion of this problem and a survey of known results).

In order to compute $\TC(X)$, one looks for approximating invariants. Lower bounds come from the cohomology algebra, in terms of the so-called zero-divisors cup-length \cite{Far1}. These can be sharpened using cohomology operations, both stable \cite{FG07} and unstable \cite{G}. A general upper bound can be given in terms of covering dimension. Namely, if $X$ is a paracompact space, then $\TC(X)\leq 2\dim(X)+1$. If $X$ is simply-connected, this can be strengthened to $\TC(X)\leq\dim(X)+1$.

In this paper we investigate the relationship of topological complexity with compact group actions. For $G$ a compact Lie group acting on a manifold $X$, we denote by $G_x = \lbrace g\in G \mid g\cdot x=x\rbrace$ the stabiliser subgroup of a point $x\in X$, and by $F(H,X)=\lbrace x\in X\mid h\cdot x = x\mbox{ for all }h\in H\rbrace$ the fixed point set of a subgroup $H\subseteq G$. A principal orbit is a special type of orbit of maximal dimension. In section 5 below, we obtain the following result.
\begin{thm}[Theorem \ref{path}]
Let $X$ be a closed, connected smooth manifold. Let $G$ be a compact Lie group acting locally smoothly on $X$, with principal orbit $P$. Suppose that for all pairs of points $x,y\in X$, either of the following conditions holds:
\be
\item The fixed-point set $F(G_x\cap G_y,X)$ is path-connected;
\item $x,y\in F(G,X)$ are fixed points.
\ee
Then
\[
\TC(X)\leq 2\dim(X)-\dim(P)+1.
\]
\end{thm}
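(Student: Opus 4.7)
The plan is to construct an open cover of $X\times X$ of cardinality $2\dim(X)-\dim(P)+1$, each member of which admits a continuous section of the free path fibration $\pi\co\mathscr{P}X\to X\times X$. The saving over the generic upper bound $2\dim(X)+1$ is to come from exploiting the $G$-action: motion along a principal orbit is ``free of charge,'' effectively replacing one copy of $X$ by the orbit space of dimension $\dim(X)-\dim(P)$.

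I would begin with the principal orbit theorem for locally smooth actions, which stratifies $X$ into finitely many $G$-invariant submanifolds indexed by orbit types, with the principal stratum open, dense, and having orbit dimension $\dim(P)$, and all other strata of strictly smaller dimension. Applied to the diagonal action on $X\times X$, this yields a stratification indexed by conjugacy classes of the diagonal stabiliser $G_x\cap G_y$.

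Next, the hypothesis will be used to build a $G$-equivariant partial section of $\pi$ stratum by stratum: to a pair $(x,y)$ of orbit type $(K)$ assign a path in the fixed submanifold $F(K,X)$ from $x$ to $y$, which exists by case~(i). Continuous choice on a neighbourhood is provided by the slice theorem, which identifies an equivariant tubular neighbourhood of an orbit as a twisted product $G\times_{G_x}V$ and reduces the local construction to a path-selection problem on $F(K,X)\times F(K,X)$, a manifold of dimension bounded by $\dim(X)$. Case~(ii) of the hypothesis is invoked precisely to handle the deepest stratum $F(G,X)\times F(G,X)$, where (i) might fail.

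Finally one counts. The $G$-equivariant partial sections descend to sections on open subsets of the orbit space $(X\times X)/G$, whose principal part has effective dimension at most $2\dim(X)-\dim(P)$ (witnessed by pairs in the principal stratum of $X\times X$ with orbit dimension at least $\dim(P)$). A covering-dimension argument on the orbit space then produces a cover of size $2\dim(X)-\dim(P)+1$, which pulls back under the quotient map to the desired cover of $X\times X$. The main technical obstacle will be the equivariant gluing of partial sections across the boundaries between strata of different orbit type: each transition alters the twisted-product description from the slice theorem, so patching without losing equivariance or openness requires an induction over the partial order of orbit types, extending from the deepest fixed-point stratum outward using the normal slice data at each stage.
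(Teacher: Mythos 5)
Your core insight matches the paper's: each orbit $G(x,y)$ of the diagonal action admits a local section of $\pi$, obtained by picking a path $\gamma$ from $x$ to $y$ in $F(G_x\cap G_y,X)$ (possible by hypothesis (1), with (2) covering the degenerate case where the orbit is a point) and transporting equivariantly via $s(gx,gy)=g\gamma$. You also correctly identify that the dimension saving comes from $\dim\big((X\times X)/G\big)=2\dim(X)-\dim(P')\leq 2\dim(X)-\dim(P)$ for the diagonal principal orbit $P'$.

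However, your route from ``local sections over individual orbits'' to ``an open cover of $X\times X$ of the right size'' has a genuine gap, and it is precisely the step you flag as the ``main technical obstacle.'' You propose to stratify $X\times X$ by orbit type, build equivariant sections on tubular neighbourhoods via the slice theorem, and then glue by induction over the orbit-type poset. But a slice neighbourhood of an orbit of type $(K)$ contains points of strictly smaller isotropy, on which the fixed-point set $F(K,X)$ is no longer the relevant one, so there is no obvious continuous (let alone equivariant) choice of path across the stratum boundary. You do not resolve this, and it is exactly where the work lies. The paper avoids stratification entirely: it proves an abstract result for closed maps $q\colon X\times X\to Y$ (Theorem~\ref{main}), whose proof (i) uses the normal ENR property of $X\times X$ to extend a section from a \emph{closed} fibre $q^{-1}(y)$ to an honest open neighbourhood in $X\times X$ (Lemma~\ref{open}), with no equivariance required and no reference to orbit types; (ii) saturates such neighbourhoods (Lemma~\ref{saturate}); and (iii) refines the resulting cover of $Y=(X\times X)/G$ using only its covering dimension (Lemma~\ref{milnor}). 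This handles the global assembly uniformly, treating all orbit types alike, and is what makes the count $(\dim Y+1)\cdot 1$ go through cleanly. You would need either to supply the ENR-extension argument (thereby rediscovering Lemma~\ref{open}) or to carry out the stratum-by-stratum induction in detail; as written, your sketch asserts the conclusion of the hard step without an argument.
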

We apply this result to obtain upper bounds on the topological complexity of various spaces of interest, such as manifolds with free or semi-free group actions, homology spheres and mapping tori of periodic diffeomorphisms.

The above result on group actions is obtained by applying the following result to the orbit map $p\co X\times X\to (X\times X)/G$ of the diagonal action.
\begin{thm}[Theorem \ref{main}]
Let $X$ be a normal ENR, and let $q\co X\times X\to Y$ be a closed map with $Y$ paracompact. Suppose further that $\TC_X(q^{-1}(y))\leq n$ for each $y\in Y$. Then
\[
\TC(X)\leq(\dim Y+1)\cdot n.
\]
\end{thm}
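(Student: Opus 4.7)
The plan is to cover $X\times X$ by $(\dim Y + 1)\cdot n$ open sets, each admitting a continuous section of the free path fibration $\pi\co \mathscr{P}X\to X\times X$. The strategy is to localize over $Y$: the hypothesis gives an $n$-fold covering with sections over each fibre $q^{-1}(y)$; closedness of $q$ promotes these to coverings over $q$-preimages of open neighborhoods in $Y$; and a colored refinement using $\dim Y$ keeps the total count bounded.

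\textbf{Step 1 (Localization over $Y$).} Fix $y\in Y$. The hypothesis $\TC_X(q^{-1}(y))\leq n$ yields open sets $W_1^y,\ldots,W_n^y\subseteq X\times X$ whose union $W^y$ contains $q^{-1}(y)$, each equipped with a continuous section $\sigma_i^y\co W_i^y\to\mathscr{P}X$ of $\pi$. (If in the paper's definition of $\TC_X$ the local sections are initially defined only on opens of the subspace $q^{-1}(y)$, they extend to opens of $X\times X$ by a standard Schwarz-type extension argument, using that $X$ is a normal ENR and $\pi$ is a Hurewicz fibration.) Because $q$ is closed,
\[
V^y := Y\setminus q\bigl((X\times X)\setminus W^y\bigr)
\]
is an open neighborhood of $y$ in $Y$ with $q^{-1}(V^y)\subseteq W^y$, so $\{V^y\}_{y\in Y}$ is an open cover of $Y$.

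\textbf{Step 2 (Dimension-theoretic refinement and gluing).} Set $k:=\dim Y$. By Ostrand's theorem applied to the paracompact space $Y$ of covering dimension $\leq k$, the cover $\{V^y\}$ has an open refinement of the form $\bigcup_{j=0}^{k}\mathcal{U}_j$, where each family $\mathcal{U}_j$ consists of pairwise disjoint open subsets of $Y$. Choose for each $U\in\mathcal{U}_j$ a witness $y(U)$ with $U\subseteq V^{y(U)}$, and put
\[
\Omega_{i,j} := \bigsqcup_{U\in\mathcal{U}_j}\bigl(W_i^{y(U)}\cap q^{-1}(U)\bigr)\qquad(1\leq i\leq n,\; 0\leq j\leq k).
\]
Each $\Omega_{i,j}$ is open in $X\times X$ as a disjoint union of opens, and the restrictions of the $\sigma_i^{y(U)}$ assemble to a continuous section of $\pi$ over $\Omega_{i,j}$. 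These $(k+1)n$ sets cover $X\times X$: given $(x_1,x_2)$, its image $q(x_1,x_2)$ lies in some $U\in\mathcal{U}_j$, so $(x_1,x_2)\in q^{-1}(U)\subseteq W^{y(U)}$ lies in some $W_i^{y(U)}$. The bound $\TC(X)\leq(\dim Y+1)\cdot n$ follows.

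\textbf{Main obstacle.} The combinatorial gluing is essentially formal once a colored refinement of $\{V^y\}$ is in hand; what is genuinely delicate is (i) ensuring local sections extend from the fibre $q^{-1}(y)$ to an open subset of $X\times X$, which is where the normal ENR and fibration hypotheses enter, and (ii) invoking a refinement theorem in the right form for the paracompact base $Y$. The first is the standard extension argument for sections of Hurewicz fibrations with ANR bases; the second is classical dimension theory.
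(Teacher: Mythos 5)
Your proof is correct and follows essentially the same route as the paper's: extend sections from each fibre to an open neighbourhood in $X\times X$, use closedness of $q$ to shrink to a saturated open set, apply a colored refinement of the resulting cover of $Y$ bounded by $\dim Y + 1$, and glue sections over the disjoint pieces. Your direct construction $V^y = Y\setminus q\bigl((X\times X)\setminus W^y\bigr)$ is precisely the proof of the paper's Lemma \ref{saturate}, your ``Ostrand's theorem'' is Lemma \ref{milnor}, and your parenthetical about extending sections from the fibre to opens of $X\times X$ is exactly the content of Lemma \ref{open} (where normality and the ENR hypothesis are used, via Dold).
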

The definition of the subspace topological complexity $\TC_X(A)$ where $A\subseteq X\times X$ can be found in section 2 below. As another application of Theorem \ref{main} we give a quick proof of (part of) the Theorem of Farber, Tabachnikov and Yuzvinsky \cite{FTY} on immersion dimension (see Corollary \ref{axial}).

  We also discuss the topological complexity of fibrations. The main result of section 3 is the following strengthening of Lemma 7 in \cite{FG07}.
\begin{thm}[Theorem \ref{fibration}]
Let $\xymatrix{F \ar[r] & E\times E \ar[r]^-q & Y}$ be a fibration with $Y$ path-connected. Then
\[
\TC(E)\leq \cat(Y)\cdot\TC_E(F).
\]
\end{thm}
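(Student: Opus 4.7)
The plan is to combine a categorical open cover of $Y$ with a partial motion planner over the fibre $F\subseteq E\times E$, using the homotopy lifting property of $q$ to transport the planner from $F$ out across each categorical open set. Let $k=\cat(Y)$ and $n=\TC_E(F)$, and choose a cover $V_1,\ldots,V_k$ of $Y$ by open sets each admitting a null-homotopy $H_i\co V_i\times I\to Y$ of the inclusion $V_i\hookrightarrow Y$, with $H_i(-,1)$ equal to some constant $y_0\in Y$. Choose also open sets $U_1,\ldots,U_n$ covering $F$ together with local sections $s_j\co U_j\to\mathscr{P}E$ of $\pi$ restricted to $F$.

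Next, for each $i$ I would apply the homotopy lifting property to the fibration $q\co E\times E\to Y$. Starting from the inclusion $q^{-1}(V_i)\hookrightarrow E\times E$, which lifts $q|_{q^{-1}(V_i)}$, and using the homotopy $H_i\circ(q|_{q^{-1}(V_i)}\times\mathrm{id}_I)$, we obtain a lift
\[
\widetilde{H}_i\co q^{-1}(V_i)\times I\longrightarrow E\times E
\]
whose time-$0$ map is the inclusion and whose time-$1$ map $\phi_i:=\widetilde{H}_i(-,1)$ factors through the fibre $F=q^{-1}(y_0)\subseteq E\times E$. Set
\[
W_{ij}:=\phi_i^{-1}(U_j)\subseteq q^{-1}(V_i).
\]
As $i$ and $j$ vary, the sets $W_{ij}$ form an open cover of $E\times E$, because the $V_i$ cover $Y$ and the $U_j$ cover $F$.

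Over each $W_{ij}$ I would build a section $\sigma_{ij}\co W_{ij}\to\mathscr{P}E$ of $\pi$ by concatenation. Writing $\widetilde{H}_i(a,b,t)=\bigl(\alpha_{a,b}(t),\beta_{a,b}(t)\bigr)$, the path
\[
\sigma_{ij}(a,b):=\alpha_{a,b}\ast s_j\bigl(\phi_i(a,b)\bigr)\ast\overline{\beta_{a,b}}
\]
runs from $a$ through $\phi_i(a,b)^{(1)}$, across the fibre via the local planner $s_j$ to $\phi_i(a,b)^{(2)}$, and then back to $b$. Continuity in $(a,b)$ is clear from continuity of $\widetilde{H}_i$, $\phi_i$ and $s_j$, and the construction exhibits $kn$ open sets that partition motion planning on $E$, yielding $\TC(E)\le\cat(Y)\cdot\TC_E(F)$.

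The main obstacle, and the conceptual heart of the argument, is the homotopy-lifting step: one must use the null-homotopy $H_i$ not merely to identify $q^{-1}(V_i)$ up to fibre homotopy with a product, but simultaneously to produce the pair of paths $\alpha_{a,b},\beta_{a,b}$ that bridge the gap between $(a,b)$ and a genuinely planned pair $\phi_i(a,b)\in F$. Once that bridge is in place, the rest is a straightforward concatenation of continuous local planners.
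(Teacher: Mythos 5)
Your argument is the same as the paper's: cover $Y$ by categorical open sets, use the HLP of $q$ to deform each preimage $q^{-1}(V_i)$ into the fibre $F$, pull back the motion planner on $F$ along the time-$1$ map, and concatenate the two tracks of the lifted homotopy with the planned path in $F$. The only difference is expository: you spell out the HLP step that the paper glosses as ``the fibration property gives a homotopy $H^j_t$.''
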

We apply this result to bound the topological complexity of aspherical spaces with nilpotent fundamental group.
\begin{cor}[Corollary \ref{nil}]
Let $\Gamma$ be a finitely generated torsion-free nilpotent group with centre $\mathcal{Z}\leq \Gamma$. Then
\[
\TC(\Gamma)\leq 2\cdot\rank(\Gamma)-\rank(\mathcal{Z})+1.
\]
\end{cor}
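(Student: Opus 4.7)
My plan is to realize $K(\Gamma,1)$ as a closed nilmanifold $E$ carrying a free torus action arising from the centre, and then to apply Theorem~\ref{fibration} to the orbit map of the induced diagonal action on $E\times E$. By Malcev's theorem $\Gamma$ embeds as a cocompact lattice in a unique simply-connected nilpotent Lie group $N$ of real dimension $n=\rank(\Gamma)$, so $E:=N/\Gamma$ is a closed smooth aspherical $n$-manifold with $\pi_1(E)=\Gamma$, whence $\TC(\Gamma)=\TC(E)$. The centre $\mathcal{Z}(N)\cong\R^k$, with $k=\rank(\mathcal{Z})$, meets $\Gamma$ exactly in $\mathcal{Z}$, and standard Malcev theory shows that $\mathcal{Z}$ is a cocompact lattice in $\mathcal{Z}(N)$; consequently $T:=\mathcal{Z}(N)/\mathcal{Z}\cong T^k$ acts smoothly and freely on $E$ by left translation $[z]\cdot[n]=[zn]$.

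I would then consider the diagonal action of $T$ on $E\times E$, which is again free, and the associated orbit map $q\co E\times E\to Y$ with $Y:=(E\times E)/T$. This is a principal $T$-bundle, in particular a fibration with fiber $T$ and path-connected base $Y$; moreover $Y$ is a closed smooth manifold of dimension $2n-k$. The crux of the argument is the claim that the subspace topological complexity $\TC_E(q^{-1}(y))$ equals $1$ for every $y\in Y$. A fiber is a single orbit $\{(ta,tb):t\in T\}$ for some $(a,b)\in E\times E$, and fixing any path $\gamma\in\mathscr{P}E$ from $a$ to $b$, the assignment
\[
s(ta,tb)(r) = t\cdot\gamma(r)
\]
defines a continuous global section of $\pi\co\mathscr{P}E\to E\times E$ over $q^{-1}(y)$: well-definedness uses freeness of the $T$-action, continuity uses that the action is continuous.

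Theorem~\ref{fibration} then yields $\TC(E)\leq \cat(Y)\cdot 1 = \cat(Y)$, and since $Y$ is a closed connected manifold of dimension $2n-k$, the standard estimate $\cat(Y)\leq \dim Y + 1$ gives $\TC(\Gamma)\leq 2n-k+1$, which is the claimed bound. The principal non-formal input is the construction of the free torus action; once one grants the Malcev-theoretic assertion that $\Gamma\cap\mathcal{Z}(N)$ is a cocompact lattice in $\mathcal{Z}(N)$, the application of Theorem~\ref{fibration} is essentially immediate, thanks to the canonical $T$-equivariant sections of the path fibration above each diagonal orbit.
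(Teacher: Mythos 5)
Your proposal is correct, but it takes a genuinely different route from the paper. Where you construct a geometric model via Malcev theory, the paper stays purely at the level of groups and Eilenberg--Mac\,Lane spaces: it first proves a general Proposition (valid for any torsion-free discrete group $\Gamma$) that $\TC(\Gamma)\leq\cat((\Gamma\times\Gamma)/\mathcal{Z})$, by observing that the fibre $K(\mathcal{Z},1)$ of the fibration $K(\mathcal{Z},1)\to K(\Gamma\times\Gamma,1)\to K(H,1)$ factors through the diagonal and hence has $\TC_{K(\Gamma,1)}$-value $1$, and then applying Theorem~\ref{fibration}. It then specialises to the nilpotent case by showing $H=(\Gamma\times\Gamma)/\mathcal{Z}$ is again finitely generated torsion-free nilpotent (the torsion-freeness requiring a citation to Lennox--Robinson), and invokes Eilenberg--Ganea--Stallings together with Gruenberg's $\cd=\rank$ to identify $\cat(H)=\rank(H)+1$. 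Your version replaces all of this algebraic machinery by a single geometric observation: realising $K(\Gamma,1)$ as the nilmanifold $N/\Gamma$ with the free $T^k$-action induced from $\mathcal{Z}(N)$, you land exactly in the situation covered by Corollary~\ref{semi} of the paper (free locally smooth action, $\TC(X)\leq\cat((X\times X)/G)\leq 2\dim X-\dim G+1$), and you bound $\cat(Y)$ by $\dim Y+1$ since $Y$ is a closed $(2n-k)$-manifold. The trade-offs: your argument avoids Eilenberg--Ganea--Stallings and the torsion-freeness lemma entirely, at the price of the nontrivial Malcev-theoretic inputs (existence and uniqueness of the Malcev completion, the identification $\mathcal{Z}(\Gamma)=\Gamma\cap\mathcal{Z}(N)$, and the fact that $\mathcal{Z}$ is a lattice in $\mathcal{Z}(N)$); the paper's Proposition, by contrast, is stated and proved in a generality that makes no nilpotency hypothesis, which is presumably why the author separated it out. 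Both proofs hinge on Theorem~\ref{fibration} applied to what is, up to homotopy, the same fibration; and your explicit section $s(ta,tb)(r)=t\cdot\gamma(r)$ is the geometric incarnation of the paper's observation that the fibre inclusion factors through the diagonal. A small stylistic note: once you have the free $T^k$-action on the closed nilmanifold $E$, you could have quoted Corollary~\ref{semi} directly rather than re-deriving the bound from Theorem~\ref{fibration}, which would make the logical dependence on the paper's results more transparent.
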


The results and proofs presented here are very much influenced by the corresponding results for Lusternik-Schnirelmann category, obtained by Oprea and Walsh \cite{OW}. The author would like to thank John Oprea and Greg Lupton for stimulating discussions, and Michael Farber, Peter Landweber and the anonymous referee for their comments on earlier drafts of the paper.

\section{Topological Complexity}

In this section we collect several definitions and results pertaining to numerical invariants, beginning with the Lusternik-Schnirelmann category of a space pair $(X,B)$.
\begin{defn}[\cite{CLOT}]\label{subcat}{\em
Let $X$ be a path-connected space with subspace $B\subseteq X$. The {\em subspace category} of $B$ in $X$, denoted $\cat_X(B)$, is the smallest integer $k$ for which $B$ admits an open cover $B=U_1\cup \dots\cup U_k$ with each composition of inclusions $U_j\hookrightarrow B\hookrightarrow X$ null-homotopic. If no such integer exists we set $\cat_X(B)=\infty$.\\

Note that $\cat_X(\emptyset)=0$ and $\cat_X(X)=\cat(X)$, the usual Lusternik-Schnirelmann category of $X$. Note also that $\cat_X(B)=1$ if and only if the inclusion $B\hookrightarrow X$ is null-homotopic.
}\end{defn}

\begin{rem}{\em Many authors prefer to normalise this definition (so that for instance $\cat_X(B)=0$ when the inclusion $B\hookrightarrow X$ is null-homotopic). Here we choose not to do so, mainly to jibe with the existing literature on topological complexity. }
\end{rem}

\begin{defn}[\cite{Sch66},\cite{CLOT}]\label{secat}{\em
Let $p\co E\rightarrow X$ be a (Hurewicz) fibration. The {\em sectional category} of $p$, denoted $\secat(p)$ (also called the {\em Schwarz genus} of $p$) is the smallest integer $k$ for which the base $X$ admits a cover $X=U_1\cup \dots\cup U_k$ by open sets, each of which admits a continuous local section $s_i\co U_i\rightarrow E$ of $p$ (that is, $s_i$ is continuous and $p\circ s_i$ is the inclusion $U_i\hookrightarrow X$). If no such integer exists we set $\secat(p)=\infty$.
}\end{defn}

We remark that Definition \ref{secat} generalises Definition \ref{subcat}, in the following sense. For a path-connected space $X$, let $PX=\{\gamma\co I=[0,1]\rightarrow X\mid \gamma(0)=x_0\}$ denote the space of all paths in $X$ emanating from a fixed base-point $x_0\in X$, endowed with the compact-open topology. It is well known that the end-point map
\[
p\co PX\rightarrow X,\qquad p(\gamma)=\gamma(1)
\]
is a fibration. Given any subspace $B\subseteq X$ we have $\cat_X(B)=\secat(p|_B)$, where $p|_B\co p^{-1}B\rightarrow B$ denotes the restriction to paths terminating in $B$.

For any space $X$, let $\mathscr{P}X=\{\gamma\co I\rightarrow X\}$ denote the space of free paths in $X$, endowed with the compact-open topology. It is well known (see Spanier \cite{Spa}) that the end-point map
\[
\pi\co \mathscr{P}X\rightarrow X\times X,\qquad\pi(\gamma)=\big(\gamma(0),\gamma(1)\big)
\]
is a fibration.

\begin{defn}[\cite{Far1},\cite{Far2},\cite{Far3}]{\em
Let $X$ be a path-connected space, with $A\subseteq X\times X$ a subspace of the product. The {\em subspace topological complexity} of $A$ in $X$ is defined to be
\[
\TC_X(A)=\secat(\pi|_A),
\]
where $\pi|_A\co \pi^{-1}A\rightarrow A$ denotes the restriction to paths whose pair of initial and terminal points lies in $A$.
}\end{defn}

Note that $\TC_X(\emptyset)=0$ and $\TC_X(X\times X)=\TC(X)$, the usual topological complexity of $X$.

\begin{lemma}[\cite{Far3}] For a nonempty subspace $A\subseteq X\times X$, the following are equivalent:
\be
\item $\TC_X(A)=1$;
\item The projections $\xymatrix{ X & A \ar[l]_-{p_1} \ar[r]^-{p_2} & X}$ are homotopic;
\item The inclusion $A\hookrightarrow X\times X$ is homotopic to a map with values in the diagonal $\triangle(X)=\{(x,x)\}\subseteq X\times X$.
\ee
\end{lemma}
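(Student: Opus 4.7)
The plan is to establish the cyclic chain of implications $(1) \Rightarrow (2) \Rightarrow (3) \Rightarrow (1)$. The pivotal tool is the exponential adjunction between free paths and homotopies: since the unit interval $I$ is locally compact Hausdorff, continuous maps $A \to \mathscr{P}X$ correspond bijectively to continuous maps $A \times I \to X$, and this correspondence is precisely what links sections of $\pi|_A$ to homotopies of the two projections.

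For $(1) \Rightarrow (2)$, I would unravel the definition: $\TC_X(A) = 1$ means that the single open set $U_1 = A$ admits a continuous section $s \co A \to \pi^{-1}(A)$ of $\pi|_A$, so for each $a \in A$ the path $s(a)$ runs from $p_1(a)$ to $p_2(a)$ in $X$. The adjoint $H \co A \times I \to X$, $H(a,t) = s(a)(t)$, is then a continuous homotopy from $p_1$ to $p_2$. For $(2) \Rightarrow (3)$, given such a homotopy $H$, I would set $K \co A \times I \to X \times X$ by $K(a,t) = \bigl(p_1(a), H(a,t)\bigr)$. Then $K(-,0)$ takes values in the diagonal $\triangle(X)$, while $K(-,1)$ coincides with the inclusion $A \hookrightarrow X \times X$, exhibiting the required homotopy.

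For $(3) \Rightarrow (1)$, suppose the inclusion $i \co A \hookrightarrow X \times X$ is homotopic to some $f \co A \to \triangle(X) \subseteq X \times X$, and write $f(a) = (g(a),g(a))$. Composing with each projection yields $p_1 = p_1 \circ i \simeq p_1 \circ f = g = p_2 \circ f \simeq p_2 \circ i = p_2$, returning us to the setting of $(2)$. The resulting homotopy $A \times I \to X$ transposes under the exponential law to a continuous map $A \to \mathscr{P}X$ whose image lies in $\pi^{-1}(A)$; this is a global section of $\pi|_A$, which establishes $\TC_X(A) = 1$.

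I see no substantial obstacle in the argument: it is essentially a repeated invocation of the path-homotopy adjunction together with routine manipulations involving the diagonal. The only point worth keeping in mind is that the exponential correspondence really is a bijection between continuous maps (not merely a set-theoretic matching of their underlying functions), but this is guaranteed by $I$ being locally compact Hausdorff, so the compact-open topology on $\mathscr{P}X$ causes no trouble.
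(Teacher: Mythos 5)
Your argument is correct, and it is exactly the ``straightforward'' proof the paper alludes to and omits: the exponential law (valid here because $I$ is compact Hausdorff) identifies sections of $\pi|_A$ with homotopies $p_1 \simeq p_2$, giving $(1)\Leftrightarrow(2)$, and the passage $(2)\Leftrightarrow(3)$ is the routine translation between a homotopy $A\times I\to X$ joining the two coordinate projections and a homotopy $A\times I\to X\times X$ pushing the inclusion into the diagonal. Nothing to add.
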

The proof is straightforward and is omitted. More generally, $\TC_X(A)$ is the smallest integer $k$ for which $A$ admits an open cover $A=U_1\cup \dots\cup U_k$ with each composition of inclusions $U_j\hookrightarrow A\hookrightarrow X\times X$ homotopic to a map with values in $\triangle(X)$.

\begin{lemma}\label{rels} Let $X$ ba a path-connected space with subspace $B\subseteq X$. Then
\[
\cat_X(B)\leq\TC_X(B\times B)\leq\cat_{X\times X}(B\times B).
\]
\end{lemma}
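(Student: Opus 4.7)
The plan is to argue both inequalities directly from the definitions, using the reformulation noted just after the preceding lemma: $\TC_X(A) \leq k$ means there is an open cover $A = U_1 \cup \cdots \cup U_k$ on each of whose pieces the two projections to $X$ are homotopic (equivalently, the inclusion into $X \times X$ is homotopic to a map with values in $\triangle(X)$).

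For the first inequality, suppose this holds with $A = B \times B$, giving a cover $\{U_j\}$ of $B \times B$ on each of which $p_1 \simeq p_2$. Fix any point $b_0 \in B$ and set $V_j = \{ b \in B : (b, b_0) \in U_j\}$. These are open in $B$, and they cover $B$ because the slice $B \times \{b_0\}$ is already covered by the $U_j$. Pulling a homotopy $p_1 \simeq p_2$ on $U_j$ back along the continuous map $b \mapsto (b, b_0)$ yields a homotopy in $X$ from the inclusion $V_j \hookrightarrow X$ to the constant map at $b_0$. Hence $V_j \hookrightarrow X$ is null-homotopic for each $j$, and $\cat_X(B) \leq k$.

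For the second inequality, start with an open cover $\{U_j\}$ of $B \times B$ realising $\cat_{X \times X}(B \times B) \leq k$, so each composition $U_j \hookrightarrow B \times B \hookrightarrow X \times X$ is null-homotopic, say to the constant map at some $(x_0, y_0) \in X \times X$. Path-connectedness of $X$ supplies a path $\alpha$ from $y_0$ to $x_0$, and $(u, t) \mapsto (x_0, \alpha(t))$ then deforms the constant map at $(x_0, y_0)$ to the constant map at $(x_0, x_0) \in \triangle(X)$. Concatenating homotopies exhibits each $U_j \hookrightarrow B \times B \hookrightarrow X \times X$ as homotopic to a map with values in $\triangle(X)$, so $\TC_X(B \times B) \leq k$.

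Neither step presents a genuine obstacle; the argument is essentially an unpacking of the two definitions. The only moment to take care is the choice of base-point $b_0$ in the first inequality, which must lie in $B$ (not merely in $X$) for the sets $V_j$ to cover $B$; path-connectedness of $X$, standing throughout this section, is the sole topological ingredient supporting the second inequality.
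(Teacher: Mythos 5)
Your proof is correct and follows essentially the same route as the paper: the first inequality is obtained by restricting a cover of $B\times B$ along the slice $b\mapsto(b,b_0)$ (you phrase the resulting null-homotopy via $p_1\simeq p_2$, while the paper writes it directly from the local section $s_j$, but these are the same data), and the second inequality is the observation, which the paper merely asserts and you spell out using path-connectedness of $X$, that a set null-homotopic in $X\times X$ is homotopic into the diagonal.
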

\begin{proof}
The second inequality is obvious (since if $U\subseteq B\times B$ is null-homotopic in $X\times X$, then it is homotopic into the diagonal). To obtain the first inequality, suppose $\TC_X(B\times B)=k$. Then we have an open cover $B\times B=U_1\cup\cdots \cup U_k$ and local sections $s_j\co U_j\rightarrow \mathscr{P}X$ of $\pi$ for $j=1,\ldots , k$. Choose a basepoint $b_0\in B$ and consider the map $i\co B\rightarrow B\times B$, $i(b)=(b,b_0)$. The sets $V_j=i^{-1}U_j$ cover $B$, are open, and admit contractions in $X$ defined by setting $\sigma_j\co V_j\times I\rightarrow X$, $\sigma_j(b,t)=s_j(b,b_0)(t)$.
\end{proof}
We record here the relation with covering dimension \cite{Sch66}: if $A\subseteq X\times X$ is paracompact, then
\begin{equation}\label{dim}
\TC_X(A)\leq \dim(A)+1.
\end{equation}
Many other properties of the subspace topological complexity are discussed in \cite{Far3}, Chapter 4. Here we will need the following lemma.

\begin{lemma}\label{open} Let $X$ be a normal ENR. If $A\subseteq X\times X$ is closed and $TC_X(A)\leq n$, then there exist open sets $W_1,\ldots ,W_n$ in $X\times X$ such that $A\subseteq \bigcup_i W_i$ and $\pi\co \mathscr{P}X\rightarrow X\times X$ admits a local section on each $W_i$.
\end{lemma}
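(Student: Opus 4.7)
The plan is to take the cover $\{U_i\}_{i=1}^n$ of $A$ witnessing $\TC_X(A)\leq n$, with sections $s_i\co U_i\to\mathscr{P}X$, write $U_i=V_i\cap A$ for some open $V_i\subseteq X\times X$, and extend each $s_i$ to a genuine section $\tilde s_i$ of $\pi$ on an open set $W_i'\subseteq V_i$ that still contains $V_i\cap A$. Since the sets $V_i\cap A$ already cover $A$, so will the $W_i'$; and since $V_i$ is open in $X\times X$ so will be $W_i'$.

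The construction uses two ingredients from the ENR structure. First, I would produce a canonical local section of $\pi$ near the diagonal: embed $X\hookrightarrow\mathbb{R}^k$ as a neighbourhood retract via $r\co O\to X$ of an open $O\supseteq X$, and let $N=\{(x,y)\in X\times X:[x,y]\subseteq O\}$, an open neighbourhood of $\triangle(X)$. The formula $\sigma(x,y)(t)=r((1-t)x+ty)$ then defines a section $\sigma\co N\to\mathscr{P}X$, with $\sigma(x,x)$ the constant path at $x$.

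Second, I would extend $s_i$ to a continuous (though not yet section) map $\hat s_i\co W_i\to\mathscr{P}X$ on an open neighbourhood of $V_i\cap A$. Pass to the adjoint $s_i^\vee\co(V_i\cap A)\times I\to X$, a continuous map from a closed subset of the metric space $V_i\times I$ into the metric ANR $X$. By the ANE property of metric ANRs, $s_i^\vee$ extends to a continuous map on an open neighbourhood of $(V_i\cap A)\times I$ in $V_i\times I$; compactness of $I$ together with the tube lemma lets this neighbourhood be chosen of the form $W_i\times I$ with $W_i\subseteq V_i$ an open neighbourhood of $V_i\cap A$. Re-adjoint to obtain $\hat s_i\co W_i\to\mathscr{P}X$ with $\hat s_i|_{V_i\cap A}=s_i$.

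Finally, I would correct $\hat s_i$ to a section using $\sigma$. On $V_i\cap A$ the endpoints $(\hat s_i(x,y)(0),\hat s_i(x,y)(1))$ coincide with $(x,y)$, so by continuity of $\hat s_i$ the set
\[
W_i'=\bigl\{(x,y)\in W_i:(x,\hat s_i(x,y)(0))\in N\text{ and }(\hat s_i(x,y)(1),y)\in N\bigr\}
\]
is open in $W_i$ and contains $V_i\cap A$. On $W_i'$ define
\[
\tilde s_i(x,y)=\sigma\bigl(x,\hat s_i(x,y)(0)\bigr)\ast\hat s_i(x,y)\ast\sigma\bigl(\hat s_i(x,y)(1),y\bigr),
\]
a continuous section of $\pi$. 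The main obstacle is the extension step that produces $\hat s_i$: it fails without suitable regularity of $X$, and is precisely where the normal ENR hypothesis enters (ensuring the ANE property used to extend $s_i^\vee$).
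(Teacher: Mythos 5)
Your proof is correct, but it takes a genuinely different route from the paper. The paper first shrinks the cover inside $A$ (using normality of $A$) to obtain closed subsets $\overline{V_i}$ of $X\times X$, observes that a local section of $\pi$ on $\overline{V_i}$ is the same thing as a homotopy $p_1|_{\overline{V_i}}\simeq p_2|_{\overline{V_i}}$, and then invokes a stock ENR result (Dold, Exercise IV.8.2) to extend this homotopy of the globally defined projections to an open neighbourhood $W_i\supseteq \overline{V_i}$, at which point the $W_i$ automatically carry sections. Your argument bypasses the shrinking step and essentially reproves the needed special case of Dold's lemma by hand: you extend the adjoint $s_i^\vee$ via the ANE property of the metric ANR $X$, use the tube lemma to make the extension domain a product with $I$, and then repair the endpoint condition by concatenating with the canonical ``straight-line'' section $\sigma$ near the diagonal coming from the ENR embedding. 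What the paper's approach buys is brevity and modularity (one citation does the work); what yours buys is self-containment and a concrete construction of the section, while also making explicit that the mechanism is really a convexity trick near the diagonal rather than an abstract homotopy extension. Two small points worth noting: the normality hypothesis in the paper is used only for the shrinking lemma, not for the ANE property (which follows from $X$ being an ANR, as ENRs are automatically metrizable and hence normal), so your attribution of where ``normal'' enters is slightly off, though harmless; and you should, as you implicitly do, take $W_i'\cap V_i$ rather than $W_i'$ if the tube-lemma neighbourhood is only open in $V_i$, so that openness in $X\times X$ is clear.
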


\begin{proof}
We may cover $A$ by sets $U_1,\ldots ,U_n$ open in $A$ such that $\pi$ admits a local section on each $U_i$. Using normality of $A$ we obtain another cover $V_i$ such that $V_i\subseteq \overline{V_i}\subseteq U_i$ for all $i=1,\ldots ,n$. Note that $\overline{V_i}$ is closed in $X\times X$.

The projections $p_1, p_2\co \overline{V_i}\rightarrow X$ are homotopic, and since $X\times X$ is a normal ENR there exist open sets $W_i\supseteq \overline{V_i}$ in $X\times X$ such that $p_1,p_2\co W_i\rightarrow X$ are homotopic (by the conclusion of Exercise IV.8.2 of \cite{Do}). The $W_i$ cover $A$, and admit local sections of $\pi$, completing the proof.
\end{proof}

\section{Fibrations}

It is well known that the Lusternik-Schnirelmann category behaves sub-additively with respect to fibrations. That is, if $\xymatrix{F \ar[r] & E \ar[r]^-p & B}$ is a fibration, then
\begin{equation}\label{cat}
\cat(E)\leq \cat(B)\cdot\cat_E(F)\leq \cat(B)\cdot\cat(F).
\end{equation}
Here we investigate analogous inequalities for topological complexity. We remark that, for a fibration $p$ as above, the question of whether the  inequality
\begin{equation}
\TC(E)\leq \TC(B)\cdot\TC(F)
\end{equation}
holds in general remains open.

In Lemma 7 of \cite{FG07} it was noted that, if $\xymatrix{F \ar[r] & E \ar[r]^-p & B}$ is a fibration, then
\begin{equation}\label{true}
\TC(E)\leq \cat(B\times B)\cdot\TC(F).
\end{equation}
This fact is obtained using the lifting properties of the two-fold product fibration $\xymatrix{F\times F \ar[r] & E\times E \ar[r]^-{p\times p} & B\times B}$. Here we strengthen inequality (\ref{true}) in two ways: by considering fibrations $q\co E\times E\rightarrow Y$ which aren't necessarily products; and by replacing $\TC(F)$ with the potentially smaller number $\TC_E(F\times F)$ in the product case.

\begin{thm}\label{fibration}
Let $\xymatrix{F \ar[r] & E\times E \ar[r]^-q & Y}$ be a fibration with $Y$ path-connected. Then
\begin{equation}\label{fibrineq}
\TC(E)\leq \cat(Y)\cdot\TC_E(F).
\end{equation}
In particular, applied to the product fibration $\xymatrix{F\times F \ar[r] & E\times E \ar[r]^-{p\times p} & B\times B}$ this gives
\begin{equation}\label{prodineq}
\TC(E)\leq \cat(B\times B)\cdot\TC_E(F\times F).
\end{equation}
\end{thm}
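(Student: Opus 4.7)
Set $m=\cat(Y)$ and $n=\TC_E(F)$. The plan is to construct an open cover of $E\times E$ of cardinality $mn$ on which the free path fibration $\pi\co\mathscr{P}E\to E\times E$ admits local sections, by combining a null-homotopy cover of $Y$ with a given motion-planning cover of the fibre $F$.

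First I would fix a basepoint $y_0\in Y$ so that $F=q^{-1}(y_0)\subseteq E\times E$, and choose an open cover $U_1,\ldots,U_m$ of $Y$ together with null-homotopies $H_i\co U_i\times I\to Y$ satisfying $H_i(y,0)=y$ and $H_i(y,1)=y_0$. Applying the homotopy lifting property of $q$ to the composite $H_i\circ(q\times\mathrm{id})\co q^{-1}(U_i)\times I\to Y$, starting from the inclusion $q^{-1}(U_i)\hookrightarrow E\times E$ at time $0$, I would obtain a lift
\[
\widetilde{H}_i\co q^{-1}(U_i)\times I\to E\times E
\]
whose terminal map $r_i=\widetilde{H}_i(-,1)$ takes values in the fibre $F$. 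Writing the coordinates of $\widetilde{H}_i((x,y),t)$ as $\bigl(\alpha_{(x,y)}^i(t),\beta_{(x,y)}^i(t)\bigr)$, the homotopy produces continuous families of paths in $E$ from $x$ to the first coordinate of $r_i(x,y)$, and from $y$ to the second coordinate of $r_i(x,y)$.

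Second, I would use $\TC_E(F)=n$ to choose an open cover $V_1,\ldots,V_n$ of $F$ with continuous local sections $s_j\co V_j\to\mathscr{P}E$ of $\pi|_{V_j}$. Setting $W_{i,j}=r_i^{-1}(V_j)$ gives $mn$ sets open in $q^{-1}(U_i)$ (and hence open in $E\times E$) which together cover $E\times E$. On each $W_{i,j}$, I would define a local section $\sigma_{i,j}\co W_{i,j}\to\mathscr{P}E$ by the three-stage concatenation
\[
\sigma_{i,j}(x,y)\;=\;\alpha_{(x,y)}^i\;\ast\;s_j\bigl(r_i(x,y)\bigr)\;\ast\;\overline{\beta_{(x,y)}^i},
\]
where $\overline{\beta}$ denotes reversal. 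By construction this is a path in $E$ from $x$ to $y$, and the concatenation is continuous in $(x,y)$, so $\sigma_{i,j}$ is a local section of $\pi$ on $W_{i,j}$. The resulting cover yields $\TC(E)\leq mn=\cat(Y)\cdot\TC_E(F)$, and the specialization to the product fibration is immediate since then $F=F_0\times F_0$ is literally the square of the fibre of $p$.

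The only real technical point is the HLP step: producing a single homotopy $\widetilde{H}_i$ lifting $H_i\circ(q\times\mathrm{id})$ from the identity on $q^{-1}(U_i)$, so that the terminal map lands in $F$ and the endpoint data varies continuously with $(x,y)$. Once this lift is in hand, the rest of the argument is formal concatenation and set-theoretic bookkeeping.
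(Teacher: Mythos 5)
Your proof is essentially the same as the paper's: fix a basepoint fibre, take a categorical cover of $Y$, use the homotopy lifting property of $q$ to push each $q^{-1}(U_i)$ into the fibre $F$, pull back the $\TC_E(F)$-cover of $F$, and define local sections by the three-fold concatenation of the two coordinate paths of the lifted homotopy with a section over $F$. The only differences are notational; the specialization to the product fibration is handled identically.
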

\begin{proof} Choose a base-point $b\in Y$; the relative topological complexity in the statement refers to the fibre $F=F_b\subseteq E\times E$. Let $\cat(Y)=k$, and assume $Y$ is covered by open sets $V_1,\ldots , V_k$ each having a contraction into $\{b\}$. The sets $G_j=q^{-1}(V_j)$ for $j=1,\ldots , k$ form an open cover of $E\times E$. For each index $j$ the fibration property gives a homotopy $H^j_t\co G_j\rightarrow E\times E$ with $H^j_0$ the inclusion and $H^j_1(G_j)\subseteq F$. Now let $\TC_E(F)=\ell$, and assume $F$ covered by open sets $W_1,\ldots , W_\ell$ on each of which $\pi$ admits a local section $s_i\co W_i\rightarrow \mathscr{P}E$. The sets $U_{ij}:=(H^j_1)^{-1}(W_i)$ for $i=1,\ldots ,\ell$, $j=1,\ldots ,k$ form an open cover of $E\times E$, and we claim that there is a local section $\sigma_{ij}$ of $\pi$ on each of them. Informally, given a point $(x,y)\in U_{ij}$ the path $\sigma_{ij}(x,y)\co I\rightarrow E$ from $x$ to $y$ is the concatenation of three paths: first, the projection of $H^j_t(x,y)$ on to the first coordinate; second, the path $s_i(H^j_1(x,y))$; finally, the projection of $H^j_{1-t}(x,y)$ onto the second coordinate. More explicitly,
\[
\sigma_{ij}(x,y)(t) = \left\{\begin{array}{ll} p_1\circ H^j_{3t}(x,y) & 0\leq t\leq 1/3; \\
                                                s_i(H^j_1(x,y))(3t-1) & 1/3 < t \leq 2/3; \\
                                                p_2\circ H^j_{3-3t}(x,y) & 2/3 < t \leq 1.  \end{array}\right.
\]
\end{proof}
\begin{rem} {\em The above proof generalises immediately to show that
\[
\TC(E)\leq \cat(q)\cdot\TC_E(F),
\]
where $\cat(q)$ denotes the category of the map $q\co E\times E\rightarrow Y$ (recall that the {\em category of a map} $f\co X\rightarrow Y$ is the smallest integer $k$ such that $X$ admits an open cover $X=U_1\cup \cdots \cup U_k$ with each restriction $f|_{U_i}\co U_i\rightarrow Y$ null-homotopic). However we will not require this level of generality here.}
\end{rem}

Can inequality (\ref{prodineq}) ever yield better upper bounds for $\TC(E)$ than inequality (\ref{true})?
Suppose $\xymatrix{F \ar[r]^-i & E \ar[r]^-p & B}$ is a fibration with non-contractible fibre, such that the fibre inclusion $i\co F\rightarrow E$ is null-homotopic. Then the map $i\times i\co F\times F \rightarrow E\times E$ is also null-homotopic, and we have $\cat_E(F)=\TC_E(F\times F)=\cat_{E\times E}(F\times F)=1$ from Lemma \ref{rels}, while $\TC(F)>1$. Hence inequality (\ref{prodineq}) gives
\begin{equation}\label{useless}
\TC(E)\leq \cat(B\times B),
\end{equation}
which we could not have concluded from inequality (\ref{true}) alone. However, inequality (\ref{useless}) can be obtained from the corresponding inequality (\ref{cat}) for category applied to the fibration $p\times p\co E\times E\rightarrow B\times B$, since
\[
\TC(E)\leq \cat(E\times E) \leq\cat(B\times B)\cdot \cat_{E\times E}(F\times F) = \cat(B\times B),
\]
where the first inequality is completely general (see \cite[Theorem 5]{Far2}).

\begin{exam}{\em If $p\co \widetilde{X}\rightarrow X$ is a covering space with $\widetilde{X}$ path-connected, then
\[
\TC(\widetilde{X})\leq\cat(X\times X).
\]
}\end{exam}
\begin{exam}[{\cite[Example 4.5]{OW}}]{\em The complex Stiefel fibrations
\[
\xymatrix{U(k) \ar[r] & V_{k,n}(\C) \ar[r]^-\rho & G_{k,n}(\C)}
\]
have null-homotopic fibre inclusions for $2k\leq n$, where $V_{k,n}(\C)$ denotes the space of $k$-frames in $\C^n$. Hence when $2k\leq n$ we may conclude that
\[
\TC(V_{k,n}(\C))\leq \cat(G_{k,n}(\C)\times G_{k,n}(\C))=2k(n-k)+1.
\]
}
\end{exam}
\begin{rem}{\em
It may be that there exist fibrations $\xymatrix{F \ar[r]^i & E \ar[r]^p & B}$ for which
\[
1 < \TC_E(F\times F) < \TC(F).
\]
We do not currently know of any examples (although we note that the latter inequality holds whenever $\TC(E)<\TC(F)$).
}\end{rem}
We now turn to cases where the more general (\ref{fibrineq}) may be applied.

\begin{exam}{\em Let $G$ be a connected topological group. The map $\mu\co G\times G\to G$ given by $(g,h)\mapsto gh^{-1}$ is a principal $G$-bundle. (To see this, consider the free right $G$-action of $G$ on $G\times G$ given by $(g,h)\cdot g_1 = (gg_1,hg_1)$, and note that $\mu$ can be identified with the orbit map $G\times G\to (G\times G)/G$ of this action.) Since the fibre over the identity is the diagonal $\triangle G\subseteq G\times G$, Theorem \ref{fibration} gives
 \[
 \TC(G)\leq \cat(G).
 \]
 As $\cat(X)\leq\TC(X)$ for any space (\cite[Theorem 5]{Far3}), this recovers the fact that $\TC(G)=\cat(G)$, first proved as Lemma 8.2 of \cite{Far2}.}
\end{exam}

In \cite{FarS}, Farber posed the problem of computing $\TC(\Gamma):=\TC(K(\Gamma,1))$, where $\Gamma$ is a torsion-free discrete group, in algebraic terms. The following result may be useful in this regard.

\begin{prop} Let $\Gamma$ be a torsion-free discrete group, and let $\mathcal{Z}\leq \Gamma$ be its centre. Identify $\mathcal{Z}$ with its image under the diagonal embedding $d\co \Gamma\to \Gamma\times\Gamma$ (which is also a normal subgroup). Then
\[
\TC(\Gamma)\leq \cat\big((\Gamma\times \Gamma)/\mathcal{Z}\big).
\]
\end{prop}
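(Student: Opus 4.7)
The plan is to apply Theorem \ref{fibration} to a carefully chosen fibration whose base is $K\big((\Gamma\times\Gamma)/\mathcal{Z},1\big)$ and whose fibre has subspace topological complexity equal to $1$. Write $E=K(\Gamma,1)$, so that $E\times E\simeq K(\Gamma\times\Gamma,1)$, and note that $\TC(\Gamma)=\TC(E)$ by definition.

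First I would verify that the diagonal image $d(\mathcal{Z})\subseteq \Gamma\times\Gamma$ is indeed a normal subgroup: for any $(g,h)\in\Gamma\times\Gamma$ and $z\in\mathcal{Z}$ one has $(g,h)(z,z)(g,h)^{-1}=(gzg^{-1},hzh^{-1})=(z,z)$ since $z$ is central. Consequently the short exact sequence
\[
1\longrightarrow \mathcal{Z}\stackrel{d}{\longrightarrow} \Gamma\times\Gamma \longrightarrow (\Gamma\times\Gamma)/\mathcal{Z}\longrightarrow 1
\]
is realised, via the classifying space construction, by a fibration
\[
\xymatrix{F \ar[r] & E\times E \ar[r]^-q & Y}
\]
with $F\simeq K(\mathcal{Z},1)$ and $Y\simeq K\big((\Gamma\times\Gamma)/\mathcal{Z},1\big)$. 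Theorem \ref{fibration} then gives
\[
\TC(E)\leq \cat(Y)\cdot\TC_E(F) = \cat\big((\Gamma\times\Gamma)/\mathcal{Z}\big)\cdot\TC_E(F).
\]

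It therefore suffices to show $\TC_E(F)\leq 1$, i.e.\ that the fibre inclusion $F\hookrightarrow E\times E$ is homotopic to a map factoring through the diagonal $\triangle(E)\subseteq E\times E$. This is the only real content and is, I expect, where care is needed. The point is that on fundamental groups the inclusion $F\hookrightarrow E\times E$ realises the homomorphism $z\mapsto(z,z)$, which factors as
\[
\mathcal{Z}\hookrightarrow \Gamma \xrightarrow{\ \Delta\ } \Gamma\times\Gamma,
\]
where $\Delta$ is the diagonal homomorphism. Since all three spaces involved are Eilenberg--MacLane spaces of type $K(\pi,1)$, this factorisation of group homomorphisms lifts (uniquely up to homotopy) to a factorisation of maps
\[
F\simeq K(\mathcal{Z},1) \longrightarrow K(\Gamma,1)=E \xrightarrow{\ \triangle\ } E\times E,
\]
homotopic to the original fibre inclusion. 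By the characterisation of $\TC_E(A)=1$ recorded just after the definition of subspace topological complexity, this gives $\TC_E(F)\leq 1$.

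Combining the two displayed inequalities yields
\[
\TC(\Gamma)=\TC(E)\leq \cat\big((\Gamma\times\Gamma)/\mathcal{Z}\big),
\]
as required.
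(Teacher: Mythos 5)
Your proof is correct and follows essentially the same route as the paper: realise the central extension $1\to\mathcal{Z}\to\Gamma\times\Gamma\to(\Gamma\times\Gamma)/\mathcal{Z}\to 1$ as a fibration of Eilenberg--MacLane spaces, observe that the fibre inclusion $K(\mathcal{Z},1)\to K(\Gamma,1)\times K(\Gamma,1)$ factors through the diagonal because $d|_{\mathcal{Z}}$ factors through $d\colon\Gamma\to\Gamma\times\Gamma$, conclude $\TC_E(F)\leq 1$, and apply Theorem \ref{fibration}.
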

\begin{proof}
Letting $H:=(\Gamma\times \Gamma)/\mathcal{Z}$ denote the quotient group, we have a group extension
\begin{equation}\label{extension}
\xymatrix{
\{1\} \to\mathcal{Z} \ar[r]^-{d|_{\mathcal{Z}}} & \Gamma\times\Gamma \ar[r] & H \to \{1\}}
\end{equation}
which leads to a fibration of the corresponding Eilenberg-Mac Lane spaces
\[
\xymatrix{
K(\mathcal{Z},1) \ar[r]^-{K(d|_{\mathcal{Z}},1)} & K(\Gamma\times\Gamma,1) \ar[r] & K(H,1).}
\]
Since the homomorphism $d|_\mathcal{Z}$ factors through $d$, the map $K(d|_\mathcal{Z},1)$ factors through $K(d,1)\co K(\Gamma,1)\to K(\Gamma\times\Gamma,1)=K(\Gamma,1)\times K(\Gamma,1)$ up to homotopy. But $K(d,1)$ is homotopic to the diagonal map $\triangle\co K(\Gamma,1)\to K(\Gamma,1)\times K(\Gamma,1)$. Hence $\TC_{K(\Gamma,1)}(K(\mathcal{Z},1))\leq 1$, and the result follows on applying Theorem \ref{fibration}.
\end{proof}

Let $\mathscr{G}$ denote the class of finitely generated torsion-free nilpotent groups. Any $\Gamma\in \mathscr{G}$ admits a central series
\[
\Gamma = \Gamma_0 \geq \Gamma_1 \geq \cdots \geq \Gamma_n = \{1\}
\]
with all quotients $\Gamma_i/\Gamma_{i+1}$ free abelian. The sum of the ranks of these quotients is the {\em rank}, (or {\em Hirsch number}) of $\Gamma$, and is denoted $\rank(\Gamma)$. It is a well-known fact (see Gruenberg \cite{Gru}, Section 8.8) that $\rank(\Gamma) = \cd(\Gamma)$ for $\Gamma\in\mathscr{G}$, where $\cd(\Gamma)$ is the cohomological dimension (the largest integer $k$ such that the group cohomology $H^k(\Gamma;A)\neq 0$ for some $\Gamma$-module $A$). On the other hand, Eilenberg and Ganea \cite{EG57} and Stallings \cite{Sta} have shown that $\cd(\Gamma)+1=\cat(\Gamma):=\cat(K(\Gamma,1))$ for any finitely generated group. Therefore, for $\Gamma\in\mathscr{G}$ we have $\cat(\Gamma)=\rank(\Gamma)+1$.

\begin{cor}\label{nil} Let $\Gamma$ be a finitely generated torsion-free nilpotent group with centre $\mathcal{Z}\leq \Gamma$. Then
\begin{equation}\label{nilineq}
\TC(\Gamma)\leq 2\cdot\rank(\Gamma)-\rank(\mathcal{Z})+1.
\end{equation}
\end{cor}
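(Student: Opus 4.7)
The plan is to combine the Proposition immediately preceding the corollary with the rank-equals-category formula for $\mathscr{G}$ recalled in the paragraph above. By that Proposition we have $\TC(\Gamma) \leq \cat(H)$, where $H := (\Gamma\times\Gamma)/\mathcal{Z}$ and (by the abuse of notation set up there) $\mathcal{Z}$ denotes the image of the diagonal embedding $d\co \mathcal{Z} \hookrightarrow \Gamma\times\Gamma$. So the corollary will follow once we show that $H \in \mathscr{G}$ and compute $\rank(H) = 2\cdot\rank(\Gamma) - \rank(\mathcal{Z})$, since then $\cat(H) = \rank(H)+1$ by the Eilenberg--Ganea--Stallings formula cited just before the corollary.

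The properties of $H$ being finitely generated and nilpotent are inherited for free from $\Gamma\times\Gamma \in \mathscr{G}$, since both pass to quotients. The main obstacle, and the step I would handle most carefully, is torsion-freeness of $H$. The key input is the classical fact that the centre of a finitely generated torsion-free nilpotent group is an \emph{isolated} subgroup: $x^n \in \mathcal{Z}$ forces $x \in \mathcal{Z}$ (this follows from the Hall--Petresco formula, or equivalently from an induction down the lower central series using Malcev coordinates). Granted this, suppose $(g,h)^n = (g^n, h^n) \in d(\mathcal{Z})$; then $g^n = h^n \in \mathcal{Z}$, so by isolation $g, h \in \mathcal{Z}$, and since $\mathcal{Z}$ is torsion-free abelian, cancellation forces $g = h$. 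Hence $(g,h) \in d(\mathcal{Z})$, so $H$ is torsion-free and therefore lies in $\mathscr{G}$.

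Finally, the Hirsch rank is additive in short exact sequences of polycyclic groups, so the extension
\[
1 \to \mathcal{Z} \to \Gamma\times\Gamma \to H \to 1
\]
yields $\rank(H) = \rank(\Gamma\times\Gamma) - \rank(\mathcal{Z}) = 2\cdot\rank(\Gamma) - \rank(\mathcal{Z})$. Combining $\TC(\Gamma) \leq \cat(H)$ with $\cat(H) = \rank(H)+1$ then gives inequality (\ref{nilineq}).
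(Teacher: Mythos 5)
Your proposal follows the paper's proof exactly: apply the preceding Proposition, show $H=(\Gamma\times\Gamma)/\mathcal{Z}\in\mathscr{G}$, and use additivity of the Hirsch rank together with $\cat(H)=\rank(H)+1$. The only cosmetic difference is in verifying torsion-freeness of $H$: you invoke isolation of the centre and argue directly on elements of $\Gamma\times\Gamma$, whereas the paper reduces this to the equivalent statement that $\Gamma/\mathcal{Z}$ is torsion-free and cites \cite[1.2.20]{LR}; both hinge on the same fact, and your argument is correct.
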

\begin{proof} Since the class $\mathscr{G}$ is closed under formation of finite direct products and subgroups, we have $\mathcal{Z}, \Gamma\times \Gamma\in\mathscr{G}$. The quotient $H:=(\Gamma\times\Gamma)/\mathcal{Z}$ is finitely generated and nilpotent. It is easily seen that torsion in $H$ would imply torsion in $\Gamma/\mathcal{Z}$. However the latter group is torsion-free, as follows from \cite[1.2.20]{LR}.  Therefore all the groups in the extension (\ref{extension}) above are in the class $\mathscr{G}$. Since the rank is additive on extensions in this class, we have
\[
\TC(\Gamma)\leq \cat(H)=\rank(H)+1 = \rank(\Gamma\times\Gamma)-\rank(\mathcal{Z})+1=2\cdot\rank(\Gamma)-\rank(\mathcal{Z})+1
\]
as required.
\end{proof}
\begin{rem}{\em It is natural to ask to what extent the inequality (\ref{nilineq}) is sharp. To this end, one seeks lower bounds for $\TC(X)$ when $X=K(\Gamma,1)$ is a nilmanifold, in terms of cohomology theory. If $\Gamma$ is abelian, then $X$ is a torus and (\ref{nilineq}) is an equality in this case. In all other cases, the presence of non-trivial Massey products in rational cohomology suggest that the methods of \cite{G} may be of use. We hope to return to this question in a future paper, noting here that $\TC(X)\geq\cat(X)=\dim(X)+1=\rank(\Gamma)+1$ is currently the best known lower bound.
}\end{rem}
\section{Closed maps}

In this section we extend Theorem \ref{fibration} to closed maps $q\co X\times X\to Y$. (Recall that a map is {\em closed} if it sends closed sets to closed sets; in particular, any map $q\co X\times X\to Y$ is closed if $X\times X$ is compact and $Y$ is Hausdorff.) This greater generality comes at the expense of replacing $\cat(Y)$ with the potentially larger number $\dim(Y)+1$. The results in this section were inspired by the corresponding results for category appearing in the paper \cite{OW}.

\begin{lemma}[{\cite{OW},\cite[Lemma A.4]{CLOT}}]\label{milnor} Let $B$ be a paracompact space with covering dimension $\dim(B)=n$, and let $\mathcal{U}=\lbrace U_\alpha\rbrace$ be any open cover of $B$. Then there exists an open refinement $\mathcal{G}=\lbrace G_{i\beta}\rbrace, i=1,\ldots ,n+1$ of $\mathcal{U}$ such that $G_{i\beta}\cap G_{i\beta'}=\emptyset$ for $\beta\neq \beta'$.
\end{lemma}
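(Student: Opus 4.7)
My plan is to prove the lemma using the standard Ostrand-type partition-of-unity coloring. The first step is to reduce to the case of a locally finite open refinement of $\mathcal{U}$ of order at most $n+1$. Since $B$ is paracompact with $\dim(B)=n$, the standard characterisation of covering dimension supplies such a refinement $\mathcal{V}=\{V_\alpha\}_{\alpha\in A}$, in which every point of $B$ lies in at most $n+1$ members of $\mathcal{V}$. I would then choose a partition of unity $\{\phi_\alpha\}_{\alpha\in A}$ subordinate to $\mathcal{V}$, so that $\mathrm{supp}(\phi_\alpha)\subseteq V_\alpha$ for each $\alpha$.

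Given this data, the refinement $\mathcal{G}$ is to be constructed as follows. For each $i\in\{1,\ldots,n+1\}$ and each subset $S\subseteq A$ with $|S|=i$, set
\[
G_{i,S}=\bigl\{x\in B : \min_{\alpha\in S}\phi_\alpha(x)>\max_{\beta\notin S}\phi_\beta(x)\bigr\},
\]
and let the second index in the notation $G_{i\beta}$ range over such subsets $S$ of size $i$. Local finiteness of $\mathcal{V}$ ensures that every point of $B$ has a neighbourhood on which only finitely many $\phi_\alpha$ are non-zero, so the right-hand expression reduces locally to a maximum of finitely many continuous functions; hence each $G_{i,S}$ is open. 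Since $G_{i,S}\subseteq\mathrm{supp}(\phi_\alpha)\subseteq V_\alpha$ for any $\alpha\in S$, and $\mathcal{V}$ refines $\mathcal{U}$, so does $\mathcal{G}$.

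The remaining verifications are essentially automatic. Pairwise disjointness at fixed $i$ is a one-line contradiction: if $x\in G_{i,S}\cap G_{i,S'}$ with $S\neq S'$, choose $\alpha\in S\setminus S'$ and $\alpha'\in S'\setminus S$; the defining inequalities then force both $\phi_\alpha(x)>\phi_{\alpha'}(x)$ (since $\alpha'\notin S$) and $\phi_{\alpha'}(x)>\phi_\alpha(x)$ (since $\alpha\notin S'$), which is impossible. For coverage, given $x\in B$ set $T=\{\alpha\in A:\phi_\alpha(x)>0\}$; the order hypothesis forces $|T|\leq n+1$, and taking $S=T$ yields $\min_{\alpha\in S}\phi_\alpha(x)>0=\max_{\beta\notin S}\phi_\beta(x)$, so $x\in G_{|T|,T}$. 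The only step that is not purely mechanical is the initial appeal to the dimension-theoretic characterisation supplying $\mathcal{V}$; everything else is the standard partition-of-unity coloring trick, and the hard part is really making sure that local finiteness is used consistently to guarantee both openness of each $G_{i,S}$ and continuity of the $\sup$ on the right-hand side.
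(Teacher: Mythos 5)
Your proof is correct. Note, however, that the paper does not prove this lemma at all; it is quoted directly from Oprea--Walsh and from Lemma A.4 of Cornea--Lupton--Oprea--Tanr\'e, so there is no in-paper argument to compare against. The argument you give --- pass to a locally finite open refinement $\mathcal{V}$ of order $\leq n+1$ (which exists because $B$ is paracompact of covering dimension $n$), take a subordinate partition of unity $\{\phi_\alpha\}$, and for each $i$-element subset $S$ define $G_{i,S}$ by the strict inequality $\min_{\alpha\in S}\phi_\alpha > \max_{\beta\notin S}\phi_\beta$ --- is precisely the standard Ostrand-style colouring that underlies the cited lemma. Your verifications of openness (local finiteness makes the outer $\max$ locally a finite $\max$), refinement ($x\in G_{i,S}$ forces $\phi_\alpha(x)>0$, hence $x\in V_\alpha$, for every $\alpha\in S$), pairwise disjointness at fixed $i$, and coverage (take $S=\{\alpha:\phi_\alpha(x)>0\}$, which is nonempty of size $\leq n+1$ by the order bound) are all sound. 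This is, in substance, the proof one finds in the cited references; the only presentational variant is that some sources phrase the same construction via open stars of barycentres in the barycentric subdivision of the nerve $N(\mathcal{V})$, but that is the same set-theoretic decomposition.
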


\begin{lemma}[{\cite{OW},\cite[Lemma 9.39]{CLOT}}]\label{saturate} Let $q\co B\to Y$ be closed, $y\in Y$. If $U\subseteq B$ is an open set with $q^{-1}(y)\subseteq U$, then there exists a saturated open set $V$ such that $q^{-1}(y)\subseteq V \subseteq U$ (recall that $V\subseteq B$ is {\em saturated} if it is the inverse image of an open set in $Y$).
\end{lemma}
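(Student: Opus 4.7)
The plan is to exploit closedness of $q$ by passing to complements. Set $C := B\setminus U$, which is closed in $B$ since $U$ is open. Because $q$ is a closed map, the image $q(C)$ is closed in $Y$. The hypothesis $q^{-1}(y)\subseteq U$ means no point of $C$ maps to $y$, so $y\notin q(C)$. Therefore $W := Y\setminus q(C)$ is an open neighbourhood of $y$ in $Y$.

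Now define $V := q^{-1}(W)$, which is saturated by construction and open by continuity of $q$. Clearly $q^{-1}(y)\subseteq V$. To verify $V\subseteq U$, I would argue contrapositively: if $x\in B\setminus U = C$, then $q(x)\in q(C)$, so $q(x)\notin W$, so $x\notin V$. Hence $V\subseteq U$, as required.

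The lemma is essentially a restatement of the standard fact that a map $q$ is closed if and only if for every $y\in Y$ and every open $U\supseteq q^{-1}(y)$ there exists an open $W\ni y$ with $q^{-1}(W)\subseteq U$; the only content is repackaging this characterisation. There is no genuine obstacle here, and no appeal to paracompactness, normality, or any structure beyond the closedness of $q$ is needed. The only thing to be careful about is not to conflate $q(C)$ with $Y\setminus W$ only set-theoretically --- one must use that $q(C)$ is \emph{closed} (this is exactly where closedness of $q$ enters) to conclude that $W$ is open.
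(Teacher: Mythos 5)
Your proof is correct, and it is the standard argument for this lemma; the paper itself gives no proof but cites \cite{OW} and \cite[Lemma 9.39]{CLOT}, where exactly this complement-and-closedness argument appears. Your closing remark correctly identifies the lemma as a reformulation of the characterisation of closed maps in terms of saturated neighbourhoods of fibres, and correctly pinpoints the one place where closedness of $q$ is used.
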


\begin{thm}\label{main} Let $X$ be a normal ENR, and let $q\co X\times X\to Y$ be a closed map with $Y$ paracompact. Suppose further that $\TC_X(q^{-1}(y))\leq n$ for each $y\in Y$. Then
\[
\TC(X)\leq(\dim Y+1)\cdot n.
\]
\end{thm}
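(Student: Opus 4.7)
The plan is to mimic the classical construction for Lusternik--Schnirelmann category by closed maps, using the three preparatory tools we have: Lemma \ref{open} (turning a subspace $\TC$-cover into open sets of $X\times X$), Lemma \ref{saturate} (replacing open neighbourhoods of fibres by saturated ones), and Lemma \ref{milnor} (refining open covers of $Y$ into $\dim(Y)+1$ colour classes whose members are pairwise disjoint within each class). The target is a cover of $X\times X$ of size $(\dim Y+1)\cdot n$, each member of which admits a local section of $\pi\co \mathscr{P}X\to X\times X$.

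First, for each $y\in Y$ the fibre $q^{-1}(y)$ is closed in the normal ENR $X\times X$ and satisfies $\TC_X(q^{-1}(y))\leq n$, so Lemma \ref{open} yields open sets $W_1^y,\dots,W_n^y$ in $X\times X$ covering $q^{-1}(y)$, each of which admits a local section of $\pi$. Set $W^y:=\bigcup_j W_j^y$; this is an open neighbourhood of $q^{-1}(y)$, so Lemma \ref{saturate} provides a saturated open set $V^y=q^{-1}(\widetilde V^y)$ with $q^{-1}(y)\subseteq V^y\subseteq W^y$, where $\widetilde V^y\subseteq Y$ is open and contains $y$.

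Next, the family $\{\widetilde V^y\}_{y\in Y}$ is an open cover of the paracompact space $Y$ of dimension $d:=\dim(Y)$. By Lemma \ref{milnor}, it admits an open refinement $\{G_{i\beta}\}$ with $i=1,\dots,d+1$ and $G_{i\beta}\cap G_{i\beta'}=\emptyset$ for $\beta\neq\beta'$. For each pair $(i,\beta)$ choose $y(i,\beta)\in Y$ with $G_{i\beta}\subseteq \widetilde V^{y(i,\beta)}$; then
\[
q^{-1}(G_{i\beta})\subseteq V^{y(i,\beta)}\subseteq W^{y(i,\beta)}=\bigcup_{j=1}^{n}W_j^{y(i,\beta)}.
\]
For each $i\in\{1,\dots,d+1\}$ and $j\in\{1,\dots,n\}$, define
\[
U_{ij}:=\bigcup_{\beta}\bigl(q^{-1}(G_{i\beta})\cap W_j^{y(i,\beta)}\bigr).
\]
Each $U_{ij}$ is open, and together they cover $X\times X$: any $(x_1,x_2)$ lies in some $q^{-1}(G_{i\beta})$ (as the $G_{i\beta}$ cover $Y$) and hence in some $W_j^{y(i,\beta)}$, so $(x_1,x_2)\in U_{ij}$.

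The crucial step is constructing a local section of $\pi$ on each $U_{ij}$. Here the disjointness guaranteed by Lemma \ref{milnor} is decisive: for fixed $i$, the sets $q^{-1}(G_{i\beta})$ (and hence their intersections with the $W_j^{y(i,\beta)}$) are pairwise disjoint open sets as $\beta$ varies, so $U_{ij}$ is a disjoint union of open pieces. On each piece we already have a local section of $\pi$ (the restriction of the section on $W_j^{y(i,\beta)}$ supplied by Lemma \ref{open}), and these paste together to a continuous local section on $U_{ij}$ precisely because the pieces are disjoint and open. This yields an open cover of $X\times X$ by $(d+1)\cdot n$ sets on each of which $\pi$ admits a local section, proving $\TC(X)\leq (\dim Y+1)\cdot n$. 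The main obstacle, which the disjointness in Lemma \ref{milnor} neatly resolves, is exactly this gluing; without the colour refinement, the various local sections on overlapping pieces need not agree and there is no way to combine them continuously.
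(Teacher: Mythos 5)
Your proof is correct and follows essentially the same approach as the paper's: saturate neighbourhoods of fibres via Lemma \ref{saturate}, refine the resulting cover of $Y$ into $\dim(Y)+1$ colour classes of pairwise disjoint sets via Lemma \ref{milnor}, intersect with the open sets from Lemma \ref{open}, and glue local sections over the disjoint pieces. Aside from notational differences, the construction and the role of each lemma match the paper's argument step for step.
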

 \begin{proof} Let $O_{y}$ denote the fibre $q^{-1}(y)$. By assumption, $\TC_X(O_{y})\leq n$. So by Lemma \ref{open} we can cover $O_{y}$ by sets $U_1^{y},\ldots ,U_n^{y}$ open in $X\times X$ admitting local sections of $\pi\co \mathscr{P}X\to X\times X$. By Lemma \ref{saturate} there exists for each fibre a saturated open set $V^{y}$ such that $$O_{y}\subseteq V^{y}\subseteq U^{y}=\bigcup_i U_i^{y}.$$
Now $V^{y}=q^{-1}\tilde{V}^{y}$ by saturation. Then $\lbrace\tilde{V}^{y}\rbrace_{y\in Y}$ is an open cover of $Y$. Let $k=\dim(Y)$. By Lemma \ref{milnor} there exists a refinement $\lbrace \tilde{G}_{i\beta}\rbrace, i=1,\ldots ,k+1$ such that each $\tilde{G}_i$ is a disjoint union of open sets $\cup_\beta\tilde{G}_{i\beta}$, each of which is contained in some $\tilde{V}^{y}$.

Let $G_i=q^{-1}(\tilde{G}_i)$ and $G_{i\beta}=q^{-1}(\tilde{G}_{i\beta})$. Suppose $\tilde{G}_{i\beta}\subseteq \tilde{V}^{y}$. Then
$$G_{i\beta}=q^{-1}(\tilde{G}_{i\beta})\subseteq q^{-1}\tilde{V}^{y}\subseteq U^{y}.$$

Now define $G_{i\beta j}=G_{i\beta}\cap U^{y}_j$ for each $j=1,\ldots ,n$, and set $G_{ij}=\bigcup_\beta G_{i\beta j}$. Note that $G_{ij}$ is a disjoint union of open sets admitting local sections of $\pi$. Note also that the $G_{ij}$ cover $X\times X$, and there are $(k+1)\cdot n$ of them. Hence $\TC(X\times X)\leq (k+1)\cdot n$ as required.
\end{proof}

\begin{cor} With $X$, $Y$ and $q\co X\times X\to Y$ as in Theorem \ref{main}, we have
\[
\TC(X)\leq (\dim(Y)+1)(\dim(q)+1),
\]
where $\dim(q)=\sup\{\dim q^{-1}(y)\mid y\in Y\}$ denotes the covering dimension of the map $q$.
\end{cor}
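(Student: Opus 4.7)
The plan is to obtain the corollary as an essentially immediate consequence of Theorem \ref{main}, using the covering dimension estimate (\ref{dim}) to verify the fibrewise hypothesis of that theorem. So I would aim to show that, for every $y \in Y$, one has $\TC_X(q^{-1}(y)) \leq \dim(q)+1$; then setting $n := \dim(q)+1$ and quoting Theorem \ref{main} produces the desired inequality
\[
\TC(X) \leq (\dim(Y)+1)\cdot n = (\dim(Y)+1)(\dim(q)+1).
\]

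To verify the fibrewise bound, fix $y \in Y$ and consider the fibre $A := q^{-1}(y) \subseteq X \times X$. Since $q$ is closed, $A$ is closed in $X\times X$. The hypothesis that $X$ is a normal ENR ensures $X\times X$ is metrizable (an ENR is a retract of an open subset of some Euclidean space, hence metrizable, and the product of two such spaces is again metrizable), so every closed subspace of $X\times X$ is paracompact. Hence $A$ is paracompact, and (\ref{dim}) applies to give
\[
\TC_X(q^{-1}(y)) \leq \dim(q^{-1}(y)) + 1 \leq \dim(q) + 1,
\]
where the second inequality is simply the definition $\dim(q) = \sup\{\dim q^{-1}(y) \mid y \in Y\}$.

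With this inequality in hand for every $y$, Theorem \ref{main} (with $n = \dim(q)+1$) directly yields the conclusion. I do not expect any genuine obstacle: the corollary is really a repackaging of Theorem \ref{main} once one replaces the abstract hypothesis $\TC_X(q^{-1}(y))\leq n$ by the concrete dimension bound. The only point that merits a brief comment in the write-up is the paracompactness of the fibres, which is what allows (\ref{dim}) to be invoked, and this follows painlessly from the ENR assumption.
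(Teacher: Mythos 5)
Your proof is correct and follows exactly the paper's one-line argument: apply the dimension bound (\ref{dim}) to each fibre to get $\TC_X(q^{-1}(y))\leq\dim(q)+1$, then invoke Theorem \ref{main}. Your remark on the paracompactness of the fibres is a reasonable extra justification for invoking (\ref{dim}), but the route is the same.
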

\begin{proof} This follows immediately from Theorem \ref{main} and inequality (\ref{dim}).
\end{proof}

In the next section we will apply Theorem \ref{main} to estimate the topological complexity of spaces with group actions. It may also be applied to give a quick proof of (part of) the result of Farber, Tabachnikov and Yuzvinsky \cite{FTY}, relating the topological complexity of real projective spaces to their immersion dimension.

For any natural number $n$, let $P^n$ denote real projective $n$-space, and let $w\in H^1(P^n;\Z_2)$ denote the generator. Recall that a map $a\co P^n\times P^n\to P^r$ is called {\em axial} if the restriction of $a$ to each factor $\{*\}\times P^n$ and $P^n\times\{*\}$ is homotopic to the inclusion $P^n\hookrightarrow P^r$. An equivalent condition is that $a^*(w)=1\times w + w\times 1\in H^1(P^n\times P^n;\Z_2)$. The main Theorem 6.1 of \cite{FTY} can be reformulated as
\begin{center}
{\em $\TC(P^n)\leq r+1$ if and only if there exists an axial map $a\co P^n\times P^n\to P^r$.}
\end{center}
It follows that $\TC(P^n)$ equals one plus the immersion dimension for $n\neq 1,3, 7$.

Here we prove the if part of their statement using Theorem \ref{main}.
\begin{cor}\label{axial} If there exists an axial map $a\co P^n\times P^n\to P^r$, then $\TC(P^n)\leq r+1$.
\end{cor}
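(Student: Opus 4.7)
The plan is to invoke Theorem~\ref{main} directly, with $X=P^n$, $Y=P^r$ and $q=a$. The map $a$ is closed (its source $P^n\times P^n$ is compact and its target $P^r$ is Hausdorff), and $P^r$ is paracompact. Since $\dim P^r=r$, Theorem~\ref{main} will yield the desired bound $\TC(P^n)\leq r+1$ as soon as we establish the fiberwise estimate $\TC_{P^n}(a^{-1}(y))\leq 1$ for every $y\in P^r$.

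By the characterisation of $\TC_X(A)\leq 1$ recorded just before Lemma~\ref{rels}, this estimate is equivalent to showing that for each $y$ the two projections $p_1,p_2\co a^{-1}(y)\to P^n$ are homotopic. The axial identity $a^*w = 1\times w + w\times 1$ in $H^1(P^n\times P^n;\Z_2)$, combined with the vanishing of $a^*w$ on any fiber $F_y := a^{-1}(y)$ (since $a$ is constant there), immediately gives $p_1^*w|_{F_y}=p_2^*w|_{F_y}$ in $H^1(F_y;\Z_2)$, so the two projections agree as classifying maps into $P^\infty=K(\Z_2,1)$.

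The principal obstacle is promoting this cohomological agreement to an honest homotopy of maps into $P^n$ itself. My plan is to first approximate $a$ by a smooth representative in its (axial) homotopy class so that, by transversality, each fiber $F_y$ has dimension at most $2n-r$; when $r\geq n+1$ this gives $\dim F_y\leq n-1$, and a standard obstruction-theoretic argument then yields the bijection $[F_y,P^n]\to H^1(F_y;\Z_2)$, so that the cohomological agreement above does upgrade to a genuine homotopy of projections in $P^n$. The boundary case $r=n$ is essentially vacuous: existence of an axial map $P^n\times P^n\to P^n$ combined with the inequality $\cat(P^n)\leq\TC(P^n)$ forces $\TC(P^n)=n+1$, which happens only in the parallelizable cases $n\in\{1,3,7\}$. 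Once the fiber bound is in place, Theorem~\ref{main} delivers $\TC(P^n)\leq(\dim P^r+1)\cdot 1 = r+1$.
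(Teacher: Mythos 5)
Your argument follows the same route as the paper's: apply Theorem~\ref{main} to a suitable smooth representative of $a$, reduce to the fiberwise estimate $\TC_{P^n}(a^{-1}(y))\leq 1$, and establish that estimate by combining the axial relation $a^*w = 1\times w + w\times 1$ with the vanishing of $\iota^* a^*w$ on a fiber, using the low dimension of the fiber to upgrade cohomological agreement to a genuine homotopy. Two points need tightening, however. First, the $r=n$ boundary case: the inequality $\cat(P^n)\leq\TC(P^n)$ only gives $\TC(P^n)\geq n+1$, which is the wrong direction, and using it to ``force $\TC(P^n)=n+1$'' is circular since $\TC(P^n)\leq n+1$ is exactly what you are trying to prove. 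What is actually needed here is the classical fact (see \cite{FTY}) that an axial map $P^n\times P^n\to P^n$ exists only when $P^n$ is parallelizable, i.e.\ $n\in\{1,3,7\}$, together with the separately known value $\TC(P^n)=n+1$ in those cases. Second, ``approximate $a$ by a smooth representative so that by transversality each fiber has dimension at most $2n-r$'' is not quite enough: Sard/transversality only controls regular fibers, and the argument needs \emph{every} fiber to be a CW-complex of dimension $<n$. The paper instead homotopes $a$ to a proper submersion (possible in the range $n<r<2n$), so that all fibers are closed $(2n-r)$-manifolds. Relatedly, you should split off the range $r\geq 2n$ (where no submersion exists, but where the trivial dimensional bound $\TC(P^n)\leq 2n+1\leq r+1$ already suffices), as the paper does. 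With these repairs your proof coincides with the paper's.
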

\begin{proof} The conclusion is always true when $r\geq 2n$, by the general dimensional upper bound $\TC(P^n)\leq 2n+1$. If $r\leq n$ then $r=n$ and we must be in one of the exceptional cases $n=1,3,7$, in which case $\TC(P^n)=n+1$. Hence we may assume that $n<r<2n$.

Since the axial condition is homotopical, we may assume that $a\co P^n\times P^n\to P^r$ is a smooth proper submersion. Note that the map $a$ satisfies all the hypotheses of Theorem \ref{main}, so it suffices to check that $\TC_{P^n}(a^{-1}(y))\leq 1$ for every $y\in P^r$.

Each fiber $F_y:=a^{-1}(y)\subseteq P^n\times P^n$ is a smooth submanifold of dimension $2n-r$.  Let $\iota\co F_y\hookrightarrow P^n\times P^n$ be the inclusion, and let $p_1,p_2\co P^n\times P^n\to P^n$ be the projections. Recall that $\TC_{P^n}(F_y)\leq 1$ if and only if $p_1\circ\iota\simeq p_2\circ\iota\co F_y\to P^n$. Since $F_y$ has the homotopy type of a CW-complex of dimension $2n-r<n$, there are isomorphisms $[F_y,P^n]\cong[F_y,P^\infty]\cong H^1(F_y;\Z_2)$, so we only need to show that $(p_1\circ\iota)^*(w)= (p_2\circ\iota)^*(w)$, where $w\in H^1(P^n;\Z_2)$ is the generator. But
\begin{align*}
(p_1\circ\iota)^*(w)= (p_2\circ\iota)^*(w) & \iff \iota^*(w\times 1)=\iota^*(1\times w) \\
 & \iff \iota^*(1\times w + w\times 1)=0 \\
 &\iff \iota^*a^*(w) = 0,
 \end{align*}
 where the latter equality is clearly true since $a\circ\iota$ is constant.
 \end{proof}

\section{Group actions}

We now apply the results of previous sections to obtain upper bounds for the topological complexity in the presence of group actions. In this section, $G$ will always be a compact Lie group, and $X$ a closed, connected smooth manifold. We quote several results from the theory of compact transformation groups, all of which may be found in the books of Bredon \cite{Bre} or tom Dieck \cite{tD}.

We fix some notation. For a (left) $G$-action $\rho\co G\times X\to X$ we will use the shorthand $\rho(g,x) = g\cdot x$.  The {\em orbit} of the element $x\in X$ under this action is $G(x)=\lbrace g\cdot x \mid g\in G\rbrace\subseteq X$. The {\em stabiliser} of $x\in X$ is the subgroup $G_x = \lbrace g\in G\mid g\cdot x = x\rbrace$ in $G$. The action is {\em free} if each stabiliser $G_x$ is the trivial subgroup $\{e\}$ (where $e\in G$ is the identity element). The action is {\em semi-free} if each stabiliser $G_x$ is either $\{e\}$ or $G$.

 The {\em fixed point set} of a subgroup $H$ of $G$ is the subspace of $X$ defined by $F(H,X)=\lbrace x\in X\mid h\cdot x = x\mbox{ for all }h\in H\rbrace$.

Let $X/G$ denote the space of orbits of the action, given the quotient topology via the {\em orbit map} $p\co X\to X/G$ which sends $x\in X$ to  $G(x)$. Since $X$ is compact, so is $X/G$. Since $G$ is compact and Hausdorff, $X/G$ is Hausdorff. Note that $p$ is therefore closed. If the action is free, then $p$ is a principal $G$-bundle, hence a fibration.

 {\em Evaluation} at a point $x\in X$ defines a map $\ev_x\co G\to X$, given by $\ev_x(g)=g\cdot x$, whose image is $G(x)$. Since $G$ is compact, the induced map $q_x\co G/G_x\to G(x)$ on cosets given by $q_x(gG_x)=g\cdot x$ is a homeomorphism onto the orbit. The orbit $G(x)$ is said to be of {\em type} $G/G_x$.

A $G$-action $G\times X\to X$ is called {\em locally smooth} if there is a linear tube $\varphi\co G\times_H V\to X$ about every orbit of type $G/H$ (here $V$ is an orthogonal representation of $H$). We refer to Bredon \cite{Bre}, Chapter IV for background material on this concept. Note that smooth actions are locally smooth. Locally smooth actions have {\em principal orbits}, that is orbits of type $G/H$ where $H$ is conjugate to a subgroup of any stabiliser group $G_x\subseteq G$. Principal orbits are orbits of maximal dimension. An orbit of maximal dimension which is not principal is called an {\em exceptional orbit}. Our primary interest in locally smooth actions stems from the fact that the covering dimension of their orbit spaces is well understood. In fact, let $P$ be a principal orbit of a locally smooth action $G\times X\to X$. Then \cite[Theorem IV.3.8]{Bre} states that
\begin{equation}\label{orbitdim}
\dim(X/G) = \dim(X)-\dim(P).
 \end{equation}

The above comments and notations apply also to actions of $G$ on the product $X\times X$. Applying Theorems \ref{fibration} and \ref{main} to the orbit map $p\co X\times X\to (X\times X)/G$, we obtain the following.
 \begin{cor}\label{group}
 Suppose that $G$ acts locally smoothly on $X\times X$, and each orbit $G(x,y)$ has $\TC_X(G(x,y))\leq n$. If $P\subseteq X\times X$ is a principal orbit, then
\[
 \TC(X)\leq (2\dim(X)-\dim(P)+1)\, n.
\]
 If the action is free, then
 \[
 \TC(X)\leq\cat((X\times X)/G)\, n.
 \]
 \end{cor}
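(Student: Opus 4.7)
The strategy is to apply Theorem \ref{main} and Theorem \ref{fibration} to the orbit map $p\co X\times X\to (X\times X)/G$ of the given locally smooth action. The orbits of this action on $X\times X$ are precisely the fibers of $p$, so the standing hypothesis $\TC_X(G(x,y))\leq n$ is exactly the fiberwise hypothesis required by those theorems.

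\textbf{First inequality.} To invoke Theorem \ref{main}, I need to check three things: (a) $X$ is a normal ENR, which holds because a closed smooth manifold is a compact smooth manifold and hence is a normal ENR; (b) the orbit space $(X\times X)/G$ is paracompact, which holds because $X\times X$ is compact Hausdorff and $G$ is compact, so $(X\times X)/G$ is compact Hausdorff; (c) the orbit map $p$ is closed, which is a standard property of quotient maps for actions of compact groups on Hausdorff spaces. Theorem \ref{main} then gives
\[
\TC(X)\leq \bigl(\dim((X\times X)/G)+1\bigr)\cdot n.
\]
Applying the dimension formula (\ref{orbitdim}) to the locally smooth action on $X\times X$ with principal orbit $P$,
\[
\dim((X\times X)/G) = \dim(X\times X)-\dim(P) = 2\dim(X)-\dim(P),
\]
which yields the first inequality.

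\textbf{Free case.} When the action is free, the orbit map $p\co X\times X\to (X\times X)/G$ is a principal $G$-bundle, and therefore a Hurewicz fibration with fiber $F\cong G$ (via the evaluation map $q_{(x,y)}\co G\to G(x,y)$, which is a homeomorphism since the stabiliser is trivial). The base $(X\times X)/G$ is path-connected because $X\times X$ is connected. The fiber $F$, being an orbit, satisfies $\TC_X(F)\leq n$ by hypothesis. Theorem \ref{fibration} then yields
\[
\TC(X)\leq \cat\bigl((X\times X)/G\bigr)\cdot \TC_X(F)\leq \cat\bigl((X\times X)/G\bigr)\cdot n.
\]

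\textbf{Main obstacle.} There is no substantial mathematical obstacle here; the proof is essentially a bookkeeping exercise combining the two main theorems of the preceding sections with standard facts from the theory of compact transformation groups. The only point requiring a bit of care is to cite the correct technical inputs (closedness of the orbit map, the dimension formula (\ref{orbitdim}) for a locally smooth action on $X\times X$, and the path-connectedness of the base in the free case), all of which are either standard or have been recalled earlier in the paper.
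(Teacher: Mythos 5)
Your proof is correct and follows exactly the approach the paper indicates (the paper itself gives no explicit proof, merely saying that the corollary follows from applying Theorems \ref{main} and \ref{fibration} to the orbit map $p\co X\times X\to (X\times X)/G$). Your careful verification of the hypotheses — normal ENR, paracompactness and closedness, the dimension formula (\ref{orbitdim}), and path-connectedness of the base in the free case — fills in the bookkeeping the paper leaves implicit.
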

Obvious candidates for $G$-actions on the product $X\times X$ are diagonal actions. Recall that given an action of $G$ on $X$ there is an associated {\em diagonal action}
\[
G\times X\times X\to X\times X, \qquad \big(g,(x,y)\big)\mapsto (g\cdot x,g\cdot y)
\]
of $G$ on $X\times X$. The diagonal action associated to a locally smooth $G$-action is locally smooth. Notice that the stabiliser of a point $(x,y)$ is $G_{(x,y)} = G_x\cap G_y$, the intersection of the stabilisers of $x$ and $y$. It follows that a principal orbit $P'$ of the diagonal action has $\dim(P')\geq\dim(P)$. The following result gives sufficient conditions for the orbits $G(x,y)\subseteq X\times X$ of a diagonal action to have $\TC_X(G(x,y))\leq 1$.

\begin{thm}\label{path} Suppose $G$ acts locally smoothly on $X$ with principal orbit $P$. Suppose further that for any $(x,y)\in X\times X$ one of the following conditions holds:
\begin{enumerate}
\item $F(G_x\cap G_y,X)$ is path-connected;
\item $x,y\in F(G,X)$.
\end{enumerate}
Then each orbit of the diagonal action of $G$ on $X\times X$ has $\TC_X(G(x,y))\leq 1$, and consequently
\[
\TC(X)\leq 2\dim (X) - \dim(P) + 1.
\]
\end{thm}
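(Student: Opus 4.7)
The plan is to apply Corollary \ref{group} to the diagonal $G$-action on $X\times X$ with $n=1$. Granted the hypothesis $\TC_X(G(x,y)) \leq 1$ for every $(x,y)\in X\times X$, the corollary yields
\[
\TC(X)\leq 2\dim(X) - \dim(P') + 1,
\]
where $P'$ denotes a principal orbit of the diagonal action. Since $\dim(P')\geq \dim(P)$ (as recorded in the paragraph preceding the theorem), this gives the stated bound. So the substance of the proof is to verify, under either hypothesis, that the restricted projections $p_1,p_2\co G(x,y)\to X$ are homotopic, i.e.\ $\TC_X(G(x,y))\leq 1$.

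Case (2) is immediate: if $x,y\in F(G,X)$, then $g\cdot x=x$ and $g\cdot y=y$ for every $g\in G$, so the orbit $G(x,y)=\{(x,y)\}$ is a single point, and a homotopy between the restricted projections is provided by any path from $x$ to $y$ in the path-connected manifold $X$.

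For case (1), observe first that both $x$ and $y$ lie in $F(G_x\cap G_y,X)$, since $G_x\cap G_y\subseteq G_x$ fixes $x$ and $G_x\cap G_y\subseteq G_y$ fixes $y$. Using path-connectedness of this fixed set, choose a path $\alpha\co I\to F(G_x\cap G_y,X)$ with $\alpha(0)=x$ and $\alpha(1)=y$, and define
\[
H\co G(x,y)\times I\to X,\qquad H\bigl(g\cdot(x,y),t\bigr) = g\cdot\alpha(t).
\]
Well-definedness is precisely the point at which the fixed-set hypothesis is used: if $g_1\cdot(x,y)=g_2\cdot(x,y)$, then $g_2^{-1}g_1\in G_x\cap G_y$ fixes $\alpha(t)$ for every $t$, hence $g_1\cdot\alpha(t)=g_2\cdot\alpha(t)$. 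Continuity then follows because the orbit map $G\to G(x,y)\cong G/(G_x\cap G_y)$ is a quotient map (in fact a principal $G_x\cap G_y$-bundle), so $H$ descends continuously from the evidently continuous map $(g,t)\mapsto g\cdot\alpha(t)$ on $G\times I$. Evaluating at $t=0$ and $t=1$ recovers $p_1|_{G(x,y)}$ and $p_2|_{G(x,y)}$ respectively, completing the verification. The only nontrivial step is the construction of $H$ in case (1); everything else is bookkeeping via Corollary \ref{group}.
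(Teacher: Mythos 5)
Your proposal is correct and follows essentially the same route as the paper: reduce to showing $\TC_X(G(x,y))\leq 1$ for each orbit, handle case (2) as a single point, and in case (1) use a path $\alpha$ in $F(G_x\cap G_y,X)$ from $x$ to $y$ together with the homeomorphism $G/(G_x\cap G_y)\cong G(x,y)$ to build the section/homotopy $g\cdot(x,y)\mapsto g\cdot\alpha$. The paper packages this as a local section $s(gx,gy)=g\gamma$ of $\pi$ over the orbit, which is the exponential adjoint of your homotopy $H$; the substance is identical.
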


\begin{proof} Fix an orbit $G(x,y)$, and choose a representative pair $(x,y)\in X\times X$. Note that $x\in F(G_x,X)\subseteq F(G_x\cap G_y,X)$ and $y\in F(G_y,X)\subseteq F(G_x\cap G_y,X)$. In case (1), we may choose a path $\gamma\co I\to F(G_x\cap G_y,X)$ with $\gamma(0)=x$ and $\gamma(1)=y$. Then evaluation at $\gamma$ induces the map $q_\gamma$ in the commutative diagram
\[
\xymatrix{
G \ar[rd] \ar[rr]^{\mathrm{ev}_\gamma} & & X^I, & g  \ar[rr] \ar[rd] &  & g\gamma \\
 & G/(G_x\cap G_y) \ar[ur]_{q_\gamma} & & & g(G_x\cap G_y) \ar[ur] &
 }
 \]
Now $q_{(x,y)}\co G/(G_x\cap G_y)\to G(x,y)$ is a homeomorphism onto the orbit. Hence we may define a local section $s\co G(x,y)\to X^I$ of $\pi$ by setting $s=q_\gamma\circ q_{(x,y)}^{-1}$, $s(gx,gy) = g\gamma$.

In case (2) the orbit $G(x,y)=\{(x,y)\}$ is a point, and so $\TC_X(G(x,y))=1$ trivially.
 \end{proof}

\begin{cor}\label{semi} If $G$ acts locally smoothly, non-trivially and semi-freely on $X$, then
 \[
 \TC(X)\leq 2\dim(X)-\dim(G)+1.
 \]
 If $G$ acts locally smoothly and freely on $X$, then
 \[
 \TC(X)\leq \cat((X\times X)/G)\leq 2\dim(X)-\dim(G)+1.
 \]
 \end{cor}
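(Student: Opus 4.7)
My plan is to deduce both statements directly from Theorem~\ref{path} and Corollary~\ref{group}, with the main work being a case analysis on stabilisers under the semi-free hypothesis.

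First I would handle the semi-free case by verifying the hypotheses of Theorem~\ref{path} for every pair $(x,y)\in X\times X$. Since every stabiliser $G_x$ is either $\{e\}$ or $G$, the intersection $G_x\cap G_y$ can only be $G$ or $\{e\}$. If $G_x\cap G_y=G$, then by definition $x,y\in F(G,X)$, so condition (2) holds. Otherwise $G_x\cap G_y=\{e\}$, and $F(\{e\},X)=X$ is path-connected (since $X$ is connected by hypothesis), so condition (1) holds. Because the action is non-trivial and semi-free, there exist points with trivial stabiliser, so the principal orbit type is $G/\{e\}$ and $\dim(P)=\dim(G)$. Theorem~\ref{path} now yields the desired bound $\TC(X)\leq 2\dim(X)-\dim(G)+1$.

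For the free case, I would additionally exploit the freeness of the induced diagonal action on $X\times X$: since $G_{(x,y)}=G_x\cap G_y=\{e\}$ whenever $G$ acts freely on $X$, the diagonal action is itself free, and the orbit map $p\co X\times X\to (X\times X)/G$ is a principal $G$-bundle. The same reasoning as above shows that $\TC_X(G(x,y))\leq 1$ for every orbit, so Corollary~\ref{group} (applied with $n=1$ to the free diagonal action) gives $\TC(X)\leq\cat\bigl((X\times X)/G\bigr)$.

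To close the chain of inequalities, I would invoke the dimension formula \eqref{orbitdim} applied to the free diagonal action, whose principal orbits have dimension $\dim(G)$; this yields $\dim\bigl((X\times X)/G\bigr)=2\dim(X)-\dim(G)$. Combining this with the standard upper bound $\cat(Y)\leq\dim(Y)+1$ for paracompact spaces (the orbit space is compact Hausdorff since $X\times X$ is compact and $G$ is a compact Lie group) gives $\cat\bigl((X\times X)/G\bigr)\leq 2\dim(X)-\dim(G)+1$. The only mildly delicate point is confirming that the principal orbit type in the free and non-trivial semi-free settings really has full dimension $\dim(G)$, which is immediate once one observes that a stabiliser of dimension less than $\dim(G)$ must be trivial under semi-freeness.
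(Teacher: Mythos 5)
Your proof is correct and follows the same route as the paper: it reduces both claims to Theorem~\ref{path} and Corollary~\ref{group}. The paper's own proof is essentially a one-liner (``It is easy to see that the conditions of Theorem~\ref{path} are satisfied, and the dimension of a principal orbit is $\dim(P)=\dim(G)$''), so what you have done is spell out the details that the paper leaves implicit. Your case analysis on $G_x\cap G_y\in\{\{e\},G\}$ correctly matches the two hypotheses of Theorem~\ref{path}, your use of non-triviality to produce a point with trivial stabiliser (hence $\dim(P)=\dim(G)$) is the right observation, and your derivation of the rightmost inequality $\cat\bigl((X\times X)/G\bigr)\leq 2\dim(X)-\dim(G)+1$ via the orbit-dimension formula \eqref{orbitdim} for the free diagonal action together with $\cat(Y)\leq\dim(Y)+1$ supplies exactly the missing step. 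The only cosmetic quibble is your closing sentence: the cleaner justification that $\dim(P)=\dim(G)$ is simply that some stabiliser is trivial (as you already argued), rather than a dimension-comparison argument about stabilisers, which is a slightly roundabout way to say the same thing.
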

 \begin{proof}
 It is easy to see that the conditions of Theorem \ref{path} are satisfied, and the dimension of a principal orbit is $\dim(P)=\dim(G)$.
\end{proof}

\begin{cor}[Homology spheres]
Let $\Sigma$ be an odd dimensional integral homology sphere. Suppose $\Sigma$ admits a non-trivial locally smooth action of $S^1$ for which the exceptional orbits are all of type $\Z_p$, $p$ prime. Then
\[
\TC(\Sigma)\leq 2\dim(\Sigma).
\]
\end{cor}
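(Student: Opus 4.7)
My plan is to apply Theorem \ref{path} to the given locally smooth $S^1$-action on $\Sigma$. Because the action is non-trivial and $\dim S^1 = 1$, a principal orbit $P$ has dimension $1$, and the bound from Theorem \ref{path} will read $\TC(\Sigma)\le 2\dim\Sigma-1+1=2\dim\Sigma$, which is precisely the desired inequality. So it suffices to verify, for every pair $(x,y)\in\Sigma\times\Sigma$, that either $F(G_x\cap G_y,\Sigma)$ is path-connected or $x,y\in F(S^1,\Sigma)$.

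First I would observe that the closed subgroups of $S^1$ are $\{e\}$, the finite cyclic groups $\Z_n$, and $S^1$ itself, and under our hypothesis on exceptional orbits the isotropy groups that appear are $\{e\}$ (at principal orbit points), $\Z_p$ for various primes $p$ (at exceptional orbit points), and $S^1$ (at fixed points). I would then case-check on $G_x\cap G_y$. If either $G_x$ or $G_y$ equals $\{e\}$, or if $\{G_x,G_y\}=\{\Z_p,\Z_q\}$ for distinct primes $p\ne q$, then $G_x\cap G_y=\{e\}$ and $F(\{e\},\Sigma)=\Sigma$ is path-connected. If $G_x=G_y=S^1$, alternative (2) applies. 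The only remaining possibility is that $G_x\cap G_y=\Z_p$ for some prime $p$ that is actually realised as an exceptional isotropy.

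The crucial step, and the main obstacle, is to show that $F(\Z_p,\Sigma)$ is path-connected in this remaining case. The plan here is to invoke Smith theory: since $\Sigma$ is an integral homology $n$-sphere it is in particular a mod-$p$ cohomology $n$-sphere, so Smith's theorem gives that $F(\Z_p,\Sigma)$ is a mod-$p$ cohomology $r$-sphere for some $-1\le r\le n$, and by local smoothness $F(\Z_p,\Sigma)$ is a closed smooth submanifold of $\Sigma$. Because $\Z_p$ is realised as an exceptional isotropy, there is an exceptional orbit $\mathcal{O}\cong S^1/\Z_p\cong S^1$; since $S^1$ is abelian, every point of $\mathcal{O}$ has stabiliser exactly $\Z_p$, so $\mathcal{O}\subseteq F(\Z_p,\Sigma)$. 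Hence $F(\Z_p,\Sigma)$ is nonempty and of dimension at least $1$, which rules out $r=-1$ and $r=0$; being a mod-$p$ cohomology sphere of positive dimension that is also a manifold, it is path-connected.

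With the two alternatives of Theorem \ref{path} verified in every case, that theorem yields $\TC(\Sigma)\le 2\dim\Sigma$, completing the argument. For odd primes one could alternatively use the parity statement of Smith's theorem together with $\dim\Sigma$ odd to force $r$ odd and hence $r\ge 1$, but the exceptional-orbit-inside-fixed-set argument above handles all primes uniformly and seems the cleanest approach.
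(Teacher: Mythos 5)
Your proposal is correct, and it follows the same outline as the paper — reduce via Theorem \ref{path} to showing that the relevant fixed sets $F(\Z_p,\Sigma)$ are path-connected, and invoke Smith theory to conclude that each such $F$ is a $\Z_p$-cohomology $r$-sphere. The essential difference is in how the two arguments rule out the troublesome disconnected case $r=0$ (i.e.\ $F\simeq_{\Z_p} S^0$). The paper splits into cases: for $p$ odd it uses the parity refinement of Smith's theorem ($n-r$ even, so $r$ is odd, hence $r\geq 1$ or $r=-1$), but for $p=2$ this fails and the paper resorts to the Heller--Swan extension of Smith theory, using the fact that $\Z_2\subset S^1$ acts trivially on $H^*(\Sigma;\Z)$ to bound $\operatorname{rank} H^0(F;\Z_2)\le 1$. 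You instead notice that whenever $G_x\cap G_y=\Z_p$ occurs, at least one of $G_x,G_y$ must itself equal $\Z_p$ (the only stabilisers containing $\Z_p$ under the hypothesis being $\Z_p$ and $S^1$), so an exceptional orbit $S^1/\Z_p\cong S^1$ actually lies inside $F(\Z_p,\Sigma)$. This makes $F$ a nonempty locally flat submanifold with a component of dimension $\geq 1$, which by $\Z_p$-Poincar\'e duality has nontrivial cohomology in positive degree and hence cannot be a cohomology $(-1)$- or $0$-sphere; so $r\geq 1$, $H^0(F;\Z_p)=\Z_p$, and $F$ is connected. Your route handles all primes $p$ uniformly, avoids Heller--Swan entirely, and automatically confines attention to the subgroups $\Z_p$ that actually arise as stabiliser intersections; the paper's route is marginally more general in that it treats all subgroups $\Z_p\subset S^1$ (allowing the empty fixed set as a degenerate case), at the cost of the $p=2$ case split. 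Both are valid. One small wording note: the fixed set of a locally smooth action is a \emph{locally flat topological} submanifold rather than a smooth one, but this is all your argument needs.
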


\begin{proof}
The non-triviality of the action implies that a principal orbit has dimension one. The intersections of stabilizers are either trivial, all of $S^1$, or $\Z_p$ for some prime $p$. Hence it is enough by Theorem \ref{path} to show that the fixed point set $F(\Z_p,\Sigma)$ of each subgroup $\Z_p\subset S^1$ is either empty or path-connected.

We use results from Smith theory. When $p$ is odd, since $\Sigma^n$ is a $\Z_p$-cohomology $n$-sphere it follows that $F(\Z_p,\Sigma)$ is a $\Z_p$-cohomology $r$-sphere, where $n-r$ is even, and hence $r$ is odd (see for example \cite[Theorem III.7.1]{Bre}). The case $r=-1$ is included and corresponds to $F(\Z_p,\Sigma)=\emptyset$. Hence $F(\Z_p,\Sigma)$ is either empty or path-connected.

When $p=2$ the same result implies that $F=F(\Z_2,\Sigma)$ is a $\Z_2$-cohomology sphere, but possibly zero dimensional. To show that this cannot occur, we invoke the extension of Smith's result due to Heller \cite{He} and Swan \cite{Swa} (see also \cite[Theorem VII.2.2]{Bre}). Note that $\Z_2\subset S^1$ acts trivially on the integral cohomology $H^*(\Sigma;\Z)$, since $S^1$ is path-connected. It follows that
\[
\mathrm{rk}\, H^0(F;\Z_2)\leq \sum_{i\geq 0} \mathrm{rk}\,H^{2i}(F;\Z_2)\leq\sum_{i\geq 0} \mathrm{rk}\,H^{2i}(\Sigma;\Z_2) = 1,
\]
the last equality since $\Sigma$ is an odd-dimensional $\Z_2$-cohomology sphere.
\end{proof}

\begin{exam}{\em Let $p,q,r>1$ be distinct primes. The {\em Brieskorn variety}
\[
\Sigma(p,q,r) = \lbrace (z_1,z_2,z_3)\in \C^3 \mid z_1^p + z_2^q + z_3^r = 0\rbrace\cap S^5\subset\C^3
\]
is an integral homology $3$-sphere.  Smooth circle actions on the Brieskorn varieties $\Sigma(p,q,r)$ have been studied by Orlik. In particular, section 9 of \cite{Or} describes a fixed-point free action with exceptional orbits of type $\Z_p$, $\Z_q$ and $\Z_r$. Hence
\[
\TC(\Sigma(p,q,r))\leq 6.
\]
Note that the $\Sigma(p,q,r)$ are  not simply-connected in general. The author does not know how to obtain this upper bound by other methods.
}\end{exam}
\begin{rem}{\em The above example includes the case of the Poincar\'{e} sphere $\Sigma=\Sigma(2,3,5)$, which admits an alternative description as the homogeneous manifold $SO(3)/I$, where $I$ denotes the icosahedral group of order $60$. Thus $\Sigma$ admits a natural action of the $3$-dimensional group $SO(3)$ with finite stabilisers, hinting that the above result may be suboptimal in this case. However, we were unable to show that $\TC(\Sigma)\leq 4$ by our methods, due to the fact that dihedral groups can seemingly occur as intersections of stabilisers, giving disconnected fixed point sets $F(G_x\cap G_y,\Sigma)=S^0$. Note that $\cat(\Sigma)=4$, see \cite{GG}.}
\end{rem}

\begin{cor} Let $\phi\co X\to X$ be a diffeomorphism of prime period $p$ (meaning $\phi^p = \mathrm{Id}_X$). Let $M=M_\phi = I \times X / (1,x)\sim(0,\phi(x))$ be the mapping torus of $\phi$. If the fixed-point set $X^\phi$ is path-connected, then
\[
\TC(M)\leq 2\dim M.
\]
\end{cor}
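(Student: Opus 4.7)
The plan is to exhibit a natural locally smooth $S^1$-action on $M$ and apply Theorem~\ref{path} to it. Write $M = (X \times \R)/\Z$, where $\Z$ acts by $n \cdot (x,t) = (\phi^n(x), t - n)$. Translation in the $\R$ factor defines an $\R$-action on $X \times \R$ that commutes with this $\Z$-action and so descends to $M$. Since $\phi^p = \mathrm{Id}_X$, the subgroup $p\Z \subset \R$ acts trivially, giving a smooth, hence locally smooth, action of $S^1 = \R/p\Z$ on $M$.

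Next I would compute stabilisers. A direct check shows that the $\R$-stabiliser of $[x,t]$ is exactly $\{s \in \Z : \phi^s(x) = x\}$, a subgroup of $\Z$ that always contains $p\Z$. Since $p$ is prime, this subgroup is either $p\Z$ or all of $\Z$; correspondingly, the $S^1$-stabiliser of $[x,t]$ is trivial when $x \notin X^\phi$ and equals the cyclic subgroup $\Z_p \subset S^1$ when $x \in X^\phi$. Every orbit is therefore one-dimensional, so any principal orbit $P$ has $\dim(P) = 1$. Moreover, $F(\Z_p, M)$ consists of those classes $[x,t]$ with $x \in X^\phi$, and restricting the $\Z$-action to $X^\phi \times \R$ reduces it to pure translation; hence $F(\Z_p, M) \cong X^\phi \times S^1$, which is path-connected by hypothesis.

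Finally, for any pair $(u,v) \in M \times M$ the intersection $S^1_u \cap S^1_v$ is either trivial, in which case its fixed-point set is all of $M$, or equal to $\Z_p$, in which case it is $X^\phi \times S^1$; both are path-connected, so condition~(1) of Theorem~\ref{path} holds for every pair. Applying the theorem yields
\[
\TC(M) \leq 2\dim(M) - \dim(P) + 1 = 2\dim(M).
\]
The only delicate step is the construction of the $S^1$-action in the first paragraph; once it is in hand, the primality of $p$ restricts the possible stabilisers to just two cases and the rest of the argument is a direct verification.
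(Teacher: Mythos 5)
Your proof is correct and follows the same route as the paper: use the natural circle action on the mapping torus, observe that stabilisers are trivial or $\Z_p$ with $F(\Z_p,M)\cong S^1\times X^\phi$ path-connected, and invoke Theorem~\ref{path} with $\dim(P)=1$. You fill in the details that the paper leaves implicit (the explicit construction of the $S^1=\R/p\Z$ action, the stabiliser computation, the use of primality of $p$, and the check that $F(\{1\},M)=M$), but the argument is the same.
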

\begin{proof} Under the natural smooth action of $S^1$ on $M$, the stabiliser of a point $[t,x]$ is either $\{1\}$ or $\Z_p$, depending on whether $x\in X$ is free or fixed under $\phi$. Note that the fixed point set $F(\Z_p,M)$ is diffeomorphic with $S^1\times X^\phi$, hence is path-connected. Now apply Theorem \ref{path}.
\end{proof}

\begin{exam}{\em Consider the involution $\phi\co S^n\to S^n$ given by reflection in the equator $S^{n-1}\subset S^n$. For $n>1$ the fixed-point set of $\phi$ is path-connected, and so $\TC(M_\phi)\leq 2n+2$. For example, the $3$-dimensional Klein bottle $K^3=M_{\phi\co S^2\to S^2}$ has $\TC(K^3)\leq 6$. However, our methods do not apply to the $2$-dimensional Klein bottle $K^2$, which can be viewed as the mapping torus of the complex conjugation $\phi\co S^1\to S^1$ with fixed-point set $S^0$.
}\end{exam}

\end{document}